\newcommand*\wb{ \boldsymbol{w} }
\newcommand*\gb{ \boldsymbol{g} }
\newcommand*\hb{ \boldsymbol{h} }
\newcommand*\vb{ \boldsymbol{v} }
\newcommand*\sbb{ \boldsymbol{s} }
\newcommand*\w{ {\boldsymbol{\scriptstyle \mathcal{W} }} }
\newcommand*\y{ {\boldsymbol{\scriptstyle \mathcal{Y}} } }
\newcommand*\z{ {\boldsymbol{\scriptstyle \mathcal{Z} }} }
\newcommand*\s{ {\boldsymbol{\scriptstyle \mathcal{S} }} }
\newcommand*\g{ {\boldsymbol{\mathscr{G} }} }
\newcommand*\A{\mathcal{A}}
\newcommand*\J{\mathcal{J}}
\newcommand*\gradcal{\nabla\J}
\newcommand*\gradappcal{\widehat{\gradcal}}
\newcommand*\one{\mathbbm{1}}
\newcommand*\transp{^\mathsf{T}}
\newcommand*\scallones{\frac{1}{K}\one\one^{\mathsf{T}}}
\newcommand*\onekronim{\frac{1}{K}(\one\transp\otimes I_M)}
\newcommand*\Ihat{\widehat{\mathcal{I}}}
\newcommand*\what{\widehat{\w}}
\newcommand*\zhat{\widehat{\z}}
\newcommand*\Ahat{\widehat{\A}}
\newcommand*\xhat{{\scriptstyle \widehat{\boldsymbol{\mathcal{X}}}}}
\newcommand*\epstau{\epsilon_\tau}
\newcommand*\mathlinebreak{\nonumber\\ &}
\newcommand*\centerr{\widetilde{\wb}}
\newcommand\expect[1]{%
\mathbb{E}\left[#1\right]
}
\newcommand\expectfi[1]{%
\expect{#1 \Bigl| \w_{i-1}}
}
\newcommand\expectnormfi[1]{%
\expect{ \left\lVert#1\right\rVert^2 \Bigl|\w_{i-1}}
}
\newcommand\expectnorm[1]{%
\mathbb{E}\left\lVert#1\right\rVert^2
}
\newcommand\norm[1]{%
\left\lVert#1\right\rVert
}
\newtheorem{assumption}{Assumption}
\newtheorem{lemma}{Lemma}
\newtheorem{theorem}{Theorem}
\renewcommand*\env@matrix[1][\arraystretch]{%
  \edef\arraystretch{#1}%
  \hskip -\arraycolsep
  \let\@ifnextchar\new@ifnextchar
  \array{*\c@MaxMatrixCols c}}
\def\expandafter\normalsize\expandafter{%
    \normalsize%
    \setlength\abovedisplayskip{5pt}%
    \setlength\belowdisplayskip{6pt}%
    \setlength\abovedisplayshortskip{-8pt}%
    \setlength\belowdisplayshortskip{2pt}%
}
\begin{document}

\title{On the Convergence of Decentralized Stochastic Gradient-Tracking with Finite-Time Consensus}

\author{Aaron Fainman and Stefan Vlaski
\thanks{The authors are with the Department of Electrical and Electronic Engineering, Imperial College London. This work was supported in part by EPSRC Grants EP/X04047X/1 and EP/Y037243/1. Emails: \{aaron.fainman22, s.vlaski\}@imperial.ac.uk.\\
A preliminary version of this work appeared in~\cite{asilomar}.\\
\textcolor{black}{Code associated with this work can be found in \url{https://github.com/aaronfainman/Approximate-FTC}}}
}

\maketitle

\begin{abstract}
Algorithms for decentralized optimization and learning rely on local optimization steps coupled with combination steps over a graph. Recent works have demonstrated that using a time-varying sequence of matrices that achieves finite-time consensus can improve the communication and iteration complexity of decentralized optimization algorithms based on gradient tracking. In practice, a sequence of matrices satisfying the exact finite-time consensus property may not be available due to imperfect knowledge of the network topology, a limit on the length of the sequence, or numerical instabilities. In this work, we quantify the impact of \emph{approximate} finite-time consensus sequences on the convergence of a gradient-tracking based decentralized optimization algorithm. Our results hold for any periodic sequence of combination matrices. We clarify the interplay between approximation error of the finite-time consensus sequence and the length of the sequence as well as typical problem parameters such as smoothness and gradient noise.
\end{abstract}

\begin{IEEEkeywords}
Decentralized optimization, finite-time consensus, gradient-tracking, consensus optimization.
\end{IEEEkeywords}

\section{Introduction}
We consider a network of $K$ agents aiming to collaboratively solve an aggregate optimization problem of the form:
\begin{align}\label{eq:cons-opt}
w^o\triangleq \underset{w\in\mathbb{R}^M}{\arg\min} ~J(w)=\underset{w\in\mathbb{R}^M}{\arg\min} ~\frac{1}{K}\sum_{k=1}^K J_k(w),
\end{align}
Here, each agent $k$ has access only to realizations of its local data \( \boldsymbol{x}_k \), giving rise to the local objectives \({ J_k(w) \triangleq \mathbb{E} Q_k(w; \boldsymbol{x}_k) } \). We represent the network through an undirected graph \( \mathcal{G} = (\mathcal{N}, \mathcal{E})\), where \( \mathcal{N} \) denotes the set of \( K \) agents, and \( \mathcal{E} \) denotes the set of edges. We denote by \( \mathcal{N}_k \) the neighborhood of agent \( k \), i.e., the set of agents with whom agent \( k \) shares an edge.

Decentralized strategies to solve problem~\eqref{eq:cons-opt} (see, for example, \cite{sayed2014, cattivelli, ram, extra, next, exact_diff, lian, vlaski_kar_23,pu21,tang18}) rely only on exchanges of information between pairs of neighbors and avoid any centralized processing. They generally consist of two main steps: a local update step, where agents update their model parameters using the gradient or some gradient approximation, and a mixing step, where agents collect and average intermediate quantities (such as models or gradients) with their neighbors via a consensus mechanism:
\begin{align}
    \mathrm{Avg}\left( \{ u_k \}_{k=1}^K \right) = \sum_{\ell \in \mathcal{N}_k} a_{\ell k,i} u_{\ell}
\end{align}
Here, $a_{\ell k,i}=[A_i]_{\ell, k}$ are the elements of the combination matrix $A_i$ at time $i$.

Performance bounds for iterative decentralized optimization algorithms generally consist of an optimization error and a consensus error. The optimization error arises from the use of iterative gradient-based updates, is common to centralized, federated and decentralized strategies, and represents a lower bound on the performance of distributed optimization algorithms. The consensus error refers to the variation of local models maintained by individual agents, and arises from the limited diffusion of information over the network. It can become negligible for densely or even fully-connected topologies, but for sparsely connected networks, it can contribute significantly to the overall error. Efforts to mitigate the consensus error have included finding the optimal static weights for specific graph topologies~\cite{xiao}, restricting the optimization to specific graph topologies~\cite{song_comm-eff, ying-exp}, and using double loop algorithms to enforce consensus~\cite{jakovetic, bastianello23}.

An alternative approach is the use of finite-time consensus (FTC) sequences~\cite{Cesar, kibangou, safavi, sandryhaila}. A sequence of \( \tau \) matrices satisfies the finite-time consensus property, if their product equals the scaled all-ones matrix:
\begin{align}\label{eqn:ftc}
A_\tau \cdots A_2 A_1 = \scallones
\end{align}
FTC sequences arise from the study of linear averaging as a method to  achieve consensus amongst agents~\cite{ko, Ko_Shi_2009, hendrickx}. Classical weighting rules, such as the Metropolis-Hastings weights~\cite{sayed2014}, can only guarantee asymptotic convergence, while FTC sequences achieve exact consensus in a finite number of steps. They effectively mimic a fully connected network over $\tau$ steps, where $\tau$ is the consensus number for the graph. The graph diameter serves as a lower bound for $\tau$, while twice the graph's radius serves as its upper bound~\cite{hendrickx}.

The advantages of FTC sequences extend beyond the consensus problem and are applicable to more general optimization problems over graphs. FTC sequences have been utilized as the combination weights in decentralized algorithms with demonstrable benefits~\cite{Cesar, ying-exp, eusipco}. For instance, FTC sequences have enabled sparser communication between agents without compromising performance~\cite{Cesar, ying-exp}, while in another work, faster convergence rates were observed for gradient-tracking algorithms using FTC sequences~\cite{eusipco}.

In practical settings, an exact sequence satisfying~\eqref{eqn:ftc} may not be available, and instead \emph{approximate} FTC sequences can be used. Numerical methods for constructing these sequences, such as eigenvalue decompositions~\cite{kibangou, safavi,sandryhaila}, graph filtering techniques~\cite{coutino,segarra}, or decentralized learning approaches~\cite{eusipco}, may introduce inaccuracies. As a result, the generated sequences may only approximate the scaled all-ones matrix. We refer to these as \emph{approximate} FTC sequences, with the quality of the approximation characterized by $\epstau$:
\begin{align}\label{eq:approx-ftc}
    \epstau \triangleq \norm{A_\tau\cdots A_2 A_1 - \scallones}
\end{align}
Empirical evidence suggests that approximate FTC sequences can nevertheless provide substantial benefit to decentralized optimization~\cite{eusipco}. \textcolor{black}{We demonstrate this in Fig.~\ref{fig:approxFTC-benefit} for a decentralized linear least squares problem, with simulation details provided in Sec.~\ref{sec:discussion}. For the considered graph, no analytical FTC sequence is known. An approximate sequence can be obtained using the numerical method in~\cite{eusipco} ($\epstau=0.7$,$\tau=4$), which we compare with the fastest distributed linear averaging (FDLA) matrix~\cite{xiao}, a static combination matrix designed to have the largest mixing rate between agents. Gradient-tracking performance with the approximate sequence clearly outperforms the FDLA case, demonstrating that approximate FTC sequences can improve performance despite not achieving exact consensus.} 

\textcolor{black}{Since exact FTC sequences are only known for a handful of graph families, and approximate sequences demonstrate empirical benefits, these observations motivate a deeper analysis into their behavior. } In this work, we analytically quantify the impact of inaccuracies in FTC sequences when used in gradient-tracking schemes, \textcolor{black}{highlighting both the benefits and trade-offs of their use}.

\begin{table*}[]
\begin{center}
\caption{Performance bounds of decentralized strategies with and without (approximate) finite-time consensus}
\begin{tabular}{@{}lcccc@{}}
\toprule
\multicolumn{1}{c}{Method} & \multicolumn{1}{c}{Objective Function} & \multicolumn{1}{c}{Combination Matrix} & \multicolumn{1}{c}{Steady-State Performance} & \multicolumn{1}{c}{Rate of Convergence} \\ \midrule
ATC-Diffusion\cite{yuan20} & Strongly Convex & Static & $\mathcal{O}\left( \frac{\mu\sigma^2}{ K} +\frac{\mu^2\lambda^2\sigma^2}{1-\lambda}+\frac{\mu^2\lambda^2\zeta^2}{(1-\lambda)^2}\right)$  & $1-\Theta(\mu\nu)$ \\
ATC-GT\cite{alghunaim}  & PL-Condition & Static & $\mathcal{O}\left(\frac{\mu\sigma^2}{K} + \frac{\mu^2\lambda^4\sigma^2}{1-\lambda}+\frac{\mu^4\lambda^4\sigma^2}{K(1-\lambda)^4} \right)$ & $1-\Theta(\mu\nu)$ \\
DIGing\cite{nedic17} & Strongly Convex & Time-Varying & - & $\left(1-\Theta(\mu\nu)\right)^{\frac{1}{2\tau}}$\\
NEXT with FTC\cite{Cesar} & Non Convex & FTC & $\mathcal{O}\left(\frac{\mu\sigma^2}{K} +\mu^2\tau^3\sigma^2 +\frac{\mu^4\tau^4\sigma^2}{K}\right)$ & - \\
Aug-DGM with FTC (this work) & Strongly Convex & Approximate FTC & $\mathcal{O}\left(\frac{\mu\sigma^2}{K} +\frac{\mu^2\tau^2\sigma^2}{K\left(1-\epstau\right)^2} +\frac{\mu^2\tau^2\sigma^2}{(1-\epstau)^2}\right)$ & $1-\Theta(\nu\mu)+\Theta(\nu\epstau\mu)$\\
\bottomrule
\label{tab:performance}\end{tabular}\\
where $\lambda$ is the second-largest eigenvalue for the static combination matrix and where we have assumed $\beta_k$ in~\eqref{eq:grad_k-var} is 0 so\\ \hspace{-220pt}that the noise terms in this work match those of~\cite{Cesar}.
\end{center}
\vspace{-15pt}
\end{table*}
\textcolor{black}{ Any periodic sequence of combination matrices can be considered an approximate FTC sequence and thus our results hold generally for gradient-tracking with time-varying but periodic combination matrices. The same recursion could also be studied in the context of general time-varying matrices (see, for example,~\cite{next,nedic16,nedic17,assran19,scutari19,saadatniaki20,huan24}). However, leveraging the periodic structure of the matrices allows us to derive a tighter bound. For example, under the assumption of $\tau$-connectivity (see Assumption~\ref{assump:comb-mat}), linear convergence in~\cite{ nedic17} is proven with a rate of convergence of~$\left(1-\Theta(\mu\nu)\right)^{\frac{1}{2\tau}}$. On the other hand, we prove convergence with an amortized rate of $1-\Theta(\nu\mu)+\Theta(\nu\mu\epstau)$\textcolor{black}{, which is faster by a factor of \( \tau\)}. Our results are summarized in Table~\ref{tab:performance}.}

\textcolor{black}{ We also highlight an interesting phenomenon in the dynamics of the recursion, namely that of a two timescale behavior. The error displays a general downwards trend in periods of $\tau$, however, iterate-on-iterate, the error may grow and shrink, as shown in Fig.s~\ref{fig:numberline} and~\ref{fig:tradeoff}(b). This non-monotonic behavior introduces challenges in the analysis. We therefore derive a bound in terms of the maximum error in periods of $\tau$, in contrast to the usual iterate-on-iterate performance guarantees.}

\textcolor{black}{\noindent We summarize our contributions as follows:
\begin{itemize}
    \item We derive performance guarantees for a gradient-tracking algorithm with approximate FTC sequences. Our bound holds for any periodic sequence of matrices, and is substantially improved compared to existing bounds in the literature that do not exploit the periodic nature of FTC sequences. Results are summarized in Table~\ref{tab:performance}.
    \item We provide a detailed analysis which is able to reconcile the two-timescale behavior of the algorithm: monotonic decrease of the centroid error and per-\( \tau \) iterate decrease for the consensus error. This technique enables us to capture the effect of the finite-time consensus mechanism on every single iteration, and results in tighter bounds for convergence rate and steady-state performance.
    \item  We characterize the rate at which performance improves as the approximation error decreases (i.e., as $\epstau$ decreases) and reveal that FTC sequences are particularly beneficial when the consensus number, $\tau$, is relatively low. The derived tradeoffs between $\tau$ and $\epstau$ indicate that for certain network topologies, approximate FTC sequences yield more benefit than exact FTC sequences. 
\end{itemize}}

\begin{figure}
\centering

\begin{minipage}{0.3\linewidth}
    \centering
    \includegraphics[width=\linewidth]{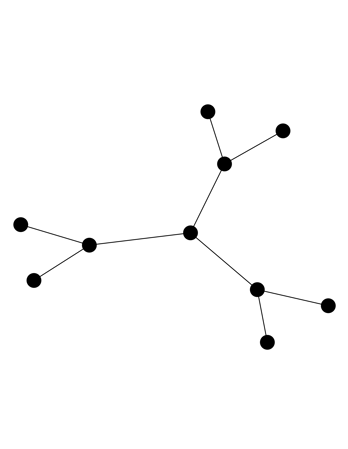}
    \vspace{10pt}\caption*{(a) Graph}
\end{minipage}
\hspace{5pt}
\begin{minipage}{0.62\linewidth}
    \centering
    \includegraphics[width=\linewidth]{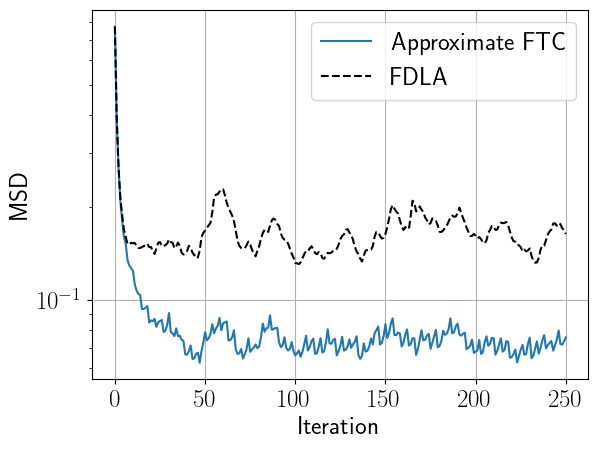}
    \caption*{(b) Performance}
\end{minipage}
\caption{\textcolor{black}{The performance of gradient-tracking can be improved using approximate FTC sequences. Here we compare with the fastest distributed linear averaging (FDLA) matrix~\cite{xiao}.}}
\label{fig:approxFTC-benefit}
\end{figure}

\vspace{-10pt}\subsection{Notation}
We use uppercase letters to denote matrices, lowercase letters for vectors, and Greek letters for scalars. Boldface symbols represent random quantities. The vector of all ones of size $K$ is denoted by $\one_K$ while $I_M$ is the $M\times M$ identity matrix. The modulo operation is denoted by $\%$. The colon subscript is used as a shorthand for the product of matrices, i.e. ${A_{m:n}=A_mA_{m-1}\cdots A_n}$. For vectors, $\norm{\cdot}$ refers to the Euclidean norm and for matrices it denotes the spectral norm.

\section{Analysis}
In principle, FTC sequences can be applied in any decentralized algorithm. In this work, we analyze a variant of the gradient-tracking algorithm, Aug-DGM~\cite{augdgm}. At each iteration $i$, agent $k$ updates the model variable, $\wb_{k,i}$, and an auxiliary variable that tracks the gradient of the aggregate cost, $\gb_{k,i}$, according to:
\begin{subequations}\label{eqn:aug-dgm-node}
\begin{align}
    \wb_{k,i} &= \sum_{\ell\in\mathcal{N}_k} a_{\ell k,i}\left(\wb_{\ell,i-1}-\gb_{\ell,i-1} \right) \label{eqn:aug-dgm-node:w} \\
    \gb_{k,i} &=  \sum_{\ell\in\mathcal{N}_k} a_{\ell k,i}\left( \gb_{\ell,i-1} + \mu\widehat{\nabla J}_\ell(\wb_{\ell,i})-\mu\widehat{\nabla J}_\ell(\wb_{\ell,i-1})\right) \label{eqn:aug-dgm-node:g}
\end{align}
\end{subequations}
where $\mu$ is the step size and the combination matrices are cycled over the FTC sequence, such that, ${A_i=A_{i\%\tau}}$. The stochastic approximation of the true gradient, $\nabla J_k(\cdot)$, is represented by $\widehat{\nabla J_k}(\cdot)$. For example, we can construct ${\widehat{\nabla J}(\wb_{k,i-1})\triangleq \nabla Q_k(\wb_{k,i-1}; \boldsymbol{x}_{k,i})}$. \textcolor{black}{ The algorithm is initialized with ${\wb_{k,0}\in\mathbb{R}^M}$ and ${\gb_{k,0}=\mu\widehat{\nabla J}_k(\wb_{k,0})}$}.

Observe that the FTC sequence is interlaced with the ordinary running of the algorithm in \eqref{eqn:aug-dgm-node:w}--\eqref{eqn:aug-dgm-node:g}. The benefit of this construction is that \eqref{eqn:aug-dgm-node:w}--\eqref{eqn:aug-dgm-node:g} is still a single-timescale algorithm, with no increase in its per-iteration communication complexity. The challenge in analyzing the dynamics of this set of recursions is that now, unlike in the case of classical consensus, agents will continue to update their local models along their local gradients over the course of one FTC sequence of length \( \tau \), which will prevent exact consensus after \( \tau \) steps, even when \( \epsilon_{\tau} \) is small or even vanishes. We will present a novel analysis framework which allows for the precise quantification of these drift effects.

As in~\cite{Cesar, asilomar}, we deviate in our formulation of Aug-DGM by multiplying the gradient approximations by $\mu$ in~\eqref{eqn:aug-dgm-node:g} rather than $\gb_{\ell,i-1}$ in~\eqref{eqn:aug-dgm-node:w} as in the original variant of the algorithm~\cite{augdgm}. This helps limit the effect of the drift between agents, as discussed in Section~\ref{sec:discussion}.

We can write the recursion more compactly by defining network-level quantities, ${\w_{i}\triangleq\textrm{col}\{\wb_{k,i} \}}$, ${\g_{i}\triangleq\textrm{col}\{\gb_{k,i} \}}$, ${\gradappcal(\w_i)=\textrm{col}\{ \widehat{\nabla J}(\wb_{k,i}) \}}$ and combination weights ${\A_i=A_i\otimes I_M}$ so that the coupled recursion becomes:
\begin{subequations}\label{eq:aug-dgm}
    \begin{align}
        \w_i &=\A_i\left( \w_{i-1}-\g_{i-1} \right)\label{eq:aug-dgm:w} \\
        \g_i &= \A_i\left(\g_{i-1}+\mu\gradappcal(\w_i)-\mu\gradappcal(\w_{i-1}) \right)\label{eq:aug-dgm:g} 
    \end{align}
\end{subequations}

We define the network centroid at iteration $i$ as ${\wb_{c,i}\triangleq \frac{1}{K}(\one\transp\otimes I_M)\w_i=\frac{1}{K}\sum_{k=1}^K \wb_{k,i} }$ and its stacked version as ${\w_{c,i}\triangleq \one\otimes\wb_{c,i}}$. For the convenience of analysis we transform the coupled recursion in~\eqref{eq:aug-dgm}. By defining the change of variable ${\y_i\triangleq\g_i-\mu\A_i\gradappcal(\w_i)}$ and then \textcolor{black}{${\z_i\triangleq\y_i+\mu \A_i\gradcal(\w_{c,i})}$}, the equation pair becomes:
\begin{subequations}\label{eqn:aug-dgm-z}
    \begin{align}
        \w_i =&\: \A_i\w_{i-1}-\A_i\z_{i-1}\nonumber\\&+\mu\A_i\A_{i-1}\Big(\gradcal(\w_{c,i-1})
        -\gradcal(\w_{i-1})\Big) \nonumber\\&-\mu\A_i\A_{i-1}\s_i(\w_{i-1})\label{eqn:aug-dgm-z:w}\\
        \z_i =&\: \A_i\z_{i-1} -\mu\A_i(I-\A_{i-1})\s_i(\w_{i-1})\nonumber\\&-\mu\A_i(I-\A_{i-1})(\gradcal(\w_{i-1})-\gradcal(\w_{c,i-1}))\nonumber\\& + \mu\A_i(\gradcal(\w_{c,i})-\gradcal(\w_{c,i-1}))
        \label{eqn:aug-dgm-z:z}\end{align}
\end{subequations}
where we define the gradient noise as ${\sbb_{k,i}(\wb_{i-1})\triangleq \widehat{\nabla J}_k(\wb_{k,i-1})-\nabla J_k(\wb_{k,i-1})}$ and its stacked version as $\s_i(\w_{i-1})\triangleq \text{col}\{\sbb_{k,i}(\wb_{k,i-1}) \}$. Details of the transformation can be found in Appendix~\ref{app:transformation}.

We begin the analysis by separately considering the centroid error of the model, defined as ${\centerr_{c,i}\triangleq w^o-\wb_{c,i}}$ for the optimal model $w^o$, and the disagreement between agents' local models ${\what_i\triangleq\Ihat\w_i=\w_i-\one\otimes\wb_{c,i}}$ where ${\Ihat\triangleq(I_K-\scallones)\otimes I_M}$. We will consider the disagreement jointly for both $\w_i$ and $\z_i$ (defined as ${\zhat_i\triangleq\Ihat\z_i})$ which we term the consensus error, ${\xhat_i\triangleq[\what_i\transp,\zhat_i\transp]\transp}$.

\vspace{-5pt}\subsection{Proof Sketch}
\textcolor{black}{The analysis proceeds by separately considering the behavior of the consensus error, $\xhat_i$, and the centroid error $\centerr_{c,i}$. The behavior of the two follow different timescales, as shown in Figure~\ref{fig:numberline}.}

\textcolor{black}{The evolution of the centroid error can be bounded from one iteration to the next, as is typically the case in gradient-based optimization algorithms. The consensus error, on the other hand, is more suitably bounded over a full consensus sequence of length \( \tau \), since the (approximate) consensus guarantee~\eqref{eq:approx-ftc} holds only at the end of a full FTC sequence.} 

\textcolor{black}{In Section~\ref{sec:consensus-err} we describe the dynamics of the consensus error over this longer timescale, deriving a bound in Lemma~\eqref{lemma:consensus-bound} for $\xhat_i$ in terms of the errors from the previous iteration $i-1$ back to the most recent complete FTC sequence, ${(\left\lfloor\frac{i}{\tau}\right\rfloor-1)\tau}$. }

\textcolor{black}{In Section~\ref{sec:centroid-err} we bound the centroid error $\centerr_{c,i}$ in relation to $\centerr_{c,i-1}$ and $\xhat_{i-1}$ (see Lemma~\eqref{lemma:centroid_bound}).}

\textcolor{black}{Reconciling the coupled recursions from Lemmas~\eqref{lemma:consensus-bound} and~\eqref{lemma:centroid_bound} over two separate timescales is the core challenge in the convergence analysis, and requires novel techniques. In Section~\ref{sec:main-res} we bridge these two timescales by considering the maximum error over the sequence, deriving the coupled recursion for the maximum centroid and consensus errors from one sequence to the next. Our main convergence result follows in Theorem~\eqref{thm:main-res}.}

\begin{center}
\begin{figure}[]
    \centering 
  \includegraphics[width=\linewidth]{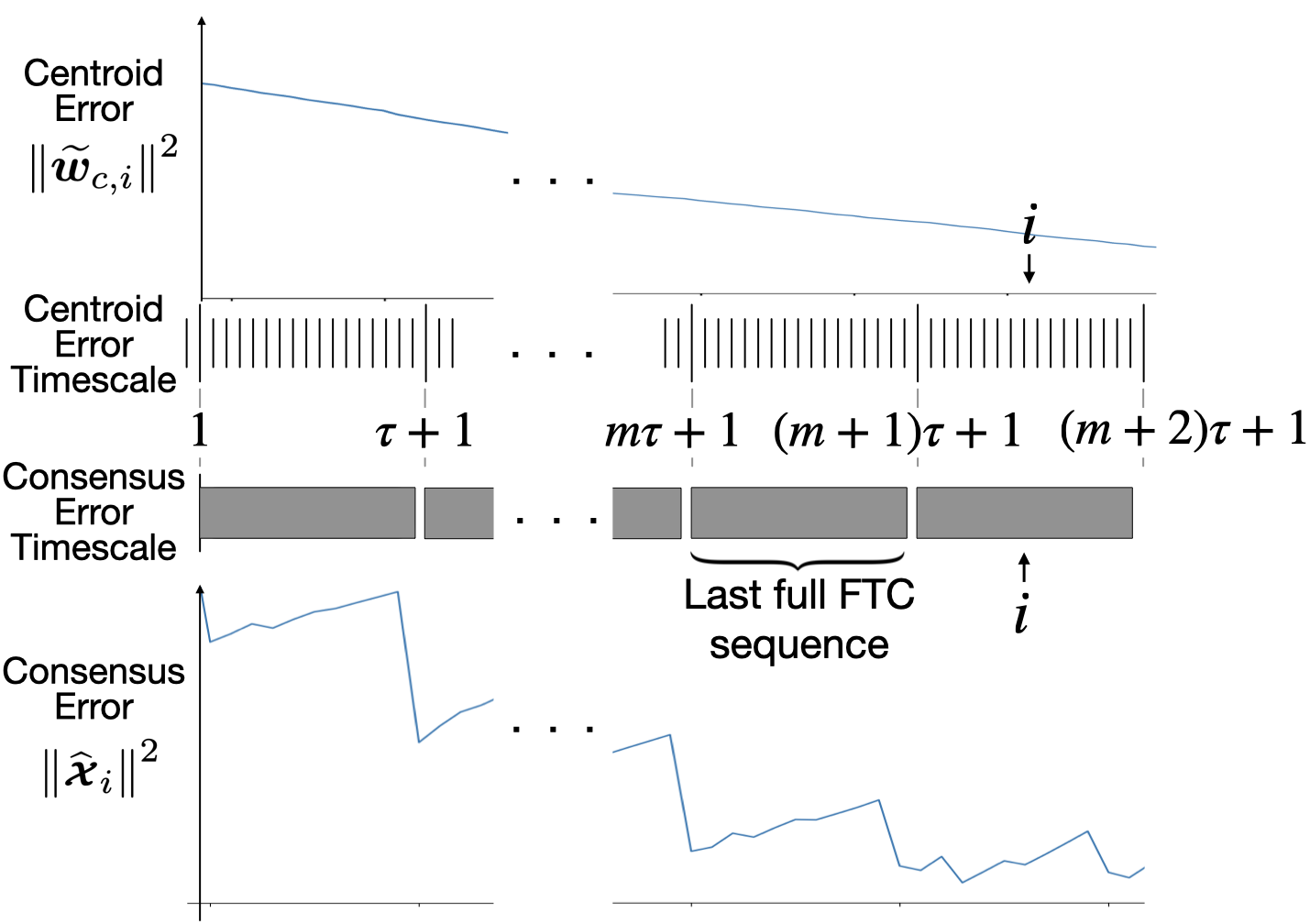}
\caption{The two-timescale behavior of the algorithm depicted with real data. The consensus error $\xhat_i$ decays over sequences of length $\tau$. It is bound in Sec.~\ref{sec:consensus-err}. The centroid error, $\centerr_{c,i}$, depends on the previous iterate alone and is bound in Sec.~\ref{sec:centroid-err}. The two errors are combined into the main result in Sec.~\ref{sec:main-res}.}
\label{fig:numberline}
\end{figure}
\end{center}

\vspace{-35pt}
\subsection{Preliminaries}\label{sec:assumptions}
We conduct the analysis under the following common regularity conditions on the objective functions, gradient noise and combination matrices.
\begin{assumption}[Regularity conditions]\label{assump:regularity}
The aggregate objective function $J(\cdot)$ is \( \nu\)-strongly convex, so that for all \( x, y \in \mathbb{R}^M: \)
\begin{align}
    J(y)\geq J(x)+\nabla J(x)\transp(y-x)+\frac{\nu}{2}\lVert y-x\rVert^2 
\end{align}
and the local gradients are Lipschitz smooth:
\begin{align}\label{eq:assump-lipsch}
    \lVert \nabla J_k(x)-\nabla J_k(y)\rVert \leq \delta\lVert x-y\rVert
\end{align}
\end{assumption}

\begin{assumption}[Gradient noise]\label{assump:gradnoise}
The gradient noise, $\sbb_{k,i}(\wb_{k,i-1})$ is unbiased, pairwise-uncorrelated and has a bounded variance conditioned on $\w_{i-1}$:
\begin{subequations}
    \begin{align}
        &\expectfi{\sbb_{k,i}(\wb_{k,i-1})} =0 \\
        &\expectfi{\sbb_{k,i}\transp(\wb_{k,i-1}) \sbb_{\ell,i}(\wb_{\ell,i-1})} =0 \label{eq:assump-gradnoise-unbias}\\
        &\expectfi{\left\lVert \boldsymbol{s}_{k,i}(\boldsymbol{w}_{k,i-1}) \right\rVert^2}\leq \beta_k^2\lVert w_k^o-\boldsymbol{w}_{k,i-1}\rVert^2+\sigma_k^2  \label{eq:grad_k-var}
    \end{align}\vspace{-2pt}
    where $w_k^o$ is the minimizer of $J_k(w)$ \textcolor{black}{and $\beta_k^2$, $\sigma_k^2$ are the local gradient noise parameters}.

Under Assumption~\eqref{assump:gradnoise}, it can be readily verified that the stacked gradient noise vector $\s_i(\w_{i-1})$ is thus also conditionally unbiased. The variance is bounded by:
    \begin{align}
        \expectnormfi{\s_i(\w_{i-1})} &\leq 3\beta^2\zeta^2+3\beta^2K\norm{\centerr_{c,i-1}}^2\mathlinebreak+3\beta^2\norm{\what_{i-1}}^2+\sigma^2 \label{eq:stacked-gradnoise-var}
    \end{align}
where we have introduced the local heterogeneity term as ${\zeta_k^2\triangleq\norm{w_k^o-w^o}^2}$, the aggregate heterogeneity as ${\zeta^2\triangleq\sum\limits_{k=1}^K\zeta_k^2}$, and the aggregate gradient noise parameters ${\sigma^2\triangleq\sum_{k=1}^K\sigma_k^2}$, ${\beta^2\triangleq\sum_{k=1}^K\beta_k^2}$.

Similarly, the centroid of the stacked gradient noise vector is bounded by:
\begin{align}
    &\expectnormfi{\onekronim\s_i(\w_{i-1})} \leq \mathlinebreak\qquad \frac{3\beta^2\zeta^2}{K}+3\beta^2\norm{\centerr_{c,i-1}}^2+\frac{3\beta^2}{K}\norm{\what_{i-1}}^2+\frac{\sigma^2}{K} \label{eq:avg-stacked-gradnoise-var}
\end{align}
\end{subequations}
\end{assumption}
\begin{assumption}[Combination Matrices]\label{assump:comb-mat}
Each combination matrix ${\{A_j\}_{j=1}^{\tau}}$ in the FTC sequence is \textcolor{black}{symmetric}, doubly-stochastic and has a spectral radius of one. We do not assume that the individual combination matrices are primitive or even connected, \textcolor{black}{however, the graph associated with the product of matrices in the sequence is strongly-connected ($\tau-$strongly-connected)}.
\end{assumption}

\begin{lemma}[Approximation Error Bound]
    \textcolor{black}{Under assumption~\eqref{assump:comb-mat} it holds that $\epstau<1$.}
\end{lemma}
\begin{proof}
\textcolor{black}{The product of matrices in the sequence, $A_{\tau:1}$, is doubly-stochastic: $A_{\tau:1}\one_K=\one_K$. Consider now, $\epstau$:}
\begin{align}
&\epstau^2\triangleq \norm{A_{\tau:1} - \scallones}_2^2 \nonumber\\
&= \rho\left( \left(A_{\tau:1} - \scallones\right)\transp\left(A_{\tau:1} - \scallones\right)\right) \nonumber \\
&=\rho\left(A_{\tau:1}\transp A_{\tau:1} - \scallones\right)
\end{align}

\noindent \textcolor{black}{$A_{\tau:1}\transp A_{\tau:1}$ is symmetric and doubly-stochastic. Strong-connectivity implies that $A_{\tau:1}$ (and thus $A_{\tau:1}\transp A_{\tau:1}$) is primitive. From the Perron-Frobenius theorem, it follows then that 1 is a simple eigenvalue of $A_{\tau:1}\transp A_{\tau:1}$ and that $\rho(A_{\tau:1}\transp A_{\tau:1})=1$. Hence, $ {\rho\left(A_{\tau:1}\transp A_{\tau:1} - \scallones\right)<1}$ and thus $\epstau^2<1$.\\}
\end{proof}

\vspace{-15pt}\subsection{Consensus Error}\label{sec:consensus-err}
We can compactly describe the dynamics of the consensus error as~\cite{Cesar}:
\begin{subequations}
\begin{equation}\label{eqn:consensus-err-evolution}
    \xhat_i = G_i\xhat_{i-1}-\mu\hb_i-\mu\vb_i
\end{equation}
    where:
    \begin{align}
        &\xhat_i\triangleq \begin{bmatrix}
            \what_i\\
            \zhat_i
        \end{bmatrix}\\
        &\Ahat_i\triangleq \Ihat\A_i \\
&G_{i} \triangleq  \begin{bmatrix}
\Ahat_{i} & - \Ahat_{i} \\
0 & \Ahat_{i} 
\end{bmatrix}  \\
&\boldsymbol{h}_{i} \triangleq \begin{bmatrix*}[l]
\Ahat_{i}\Ahat_{i-1}(\gradcal(\w_{i-1})-\gradcal(\w_{c,i-1})) \\
\A_{i} (I-A_{i-1})(\gradcal(\w_{i-1})-\gradcal(\w_{c,i-1}))-\\ \hspace{80pt}\Ahat_{i}(\gradcal(\w_{c,i})-\gradcal(\w_{c,i-1}))
\end{bmatrix*} \label{eqn:h-defn}\\
&\boldsymbol{v}_{i} \triangleq \begin{bmatrix}
    \Ahat_{i}\Ahat_{i-1}\s_{i}(\w_{i-1}) \\
    \Ahat_{i}(I-\A_{i-1})\s_{i}(\w_{i-1})
\end{bmatrix}\label{eqn:v-defn}
    \end{align}
\end{subequations}
which follows after premultiplying~\eqref{eqn:aug-dgm-z} by $\Ihat$ and noting that ${\Ihat\A_i=\Ahat_i=\Ahat_i\Ihat}$ and ${\Ihat(I-\A_i)=I-\A_i}$.

In order to make use of the FTC property in~\eqref{eq:approx-ftc} we must consider at least a full sequence of combination matrices. We do this by iterating backwards over the consensus error in~\eqref{eqn:consensus-err-evolution} from time $i$ until the start of the last full FTC sequence, ${m\tau+1}$ where ${m=\left\lfloor\frac{i}{\tau}\right\rfloor-1}$ as shown in Figure~\ref{fig:numberline}. We begin by substituting~\eqref{eqn:consensus-err-evolution} into $\xhat_{i-1}$:
\begin{align}
    \xhat_{i} &= G_{i:i-1}\xhat_{i-2}-\mu G_i\hb_{i-1} - \mu \hb_i -\mu G_i\vb_{i-1} - \mu \vb_i
    \end{align}
    and then substitute~\eqref{eqn:consensus-err-evolution} again in $\xhat_{i-3}$
    \begin{align}
     \xhat_{i} &= G_{i:i-2}\xhat_{i-3}-\mu G_{i:i-1}\hb_{i-2}-\mu G_i\hb_{i-1}- \mu \hb_i \mathlinebreak\qquad-\mu G_{i:i-1}\vb_{i-2}-\mu G_i\vb_{i-1}- \mu \vb_i 
     \end{align}
     Repeating the process until time $m\tau+1$:
     \begin{align}
    \xhat_i &= G_{i:m\tau+1}\xhat_{m\tau}-\mu\sum_{\mathclap{j=m\tau+1}}^{i-1}G_{i:j+1}\hb_{j}-\mu\hb_i\mathlinebreak\qquad-\mu\sum_{\mathclap{j=m\tau+1}}^{i-1}G_{i:j+1}\vb_{j} -\mu\vb_i \label{eqn:cons-err-mtau}
    \end{align}
Iterating backwards to time~$m\tau+1$ ensures that ${G_{i:m\tau+1}}$ consists of the product of between $\tau$ and $2\tau-1$ matrices, which will be necessary to employ the FTC property in~\eqref{eqn:ftc}.

In order to bound~\eqref{eqn:cons-err-mtau} we establish the following two lemmas, the proofs of which are found in Appendix~\ref{app:cons-err-relation}.

\begin{lemma}[Perturbation Bounds]\label{lemma:v-h-bound} Under Assumptions~\eqref{assump:gradnoise} and~\eqref{assump:comb-mat}, the vector $\vb_i$ is conditionally unbiased and bounded by:
    \begin{align}
        \expectfi{\lVert \boldsymbol{v}_i\rVert^2}&\leq 
        9\beta^2\zeta^2+9\beta^2K\norm{\centerr_{c,i-1}}^2\mathlinebreak \qquad +9\beta^2\norm{\what_{i-1}}^2+3\sigma^2\label{eq:vi-var}
    \end{align}
and, $\boldsymbol{h}_i$, is bounded by:
    \begin{align}\label{eq:h-bound}
        &\expectnormfi{\boldsymbol{h}_i}\leq 3\left(2\delta^2+\frac{1}{2}\beta^2\right)\norm{\what_{i-1}} \mathlinebreak\quad+ 2(\delta^2+\beta^2K)\norm{\centerr_{c,i-1}}^2+\frac{3}{2}\beta^2\zeta^2+\frac{1}{2}\sigma^2
    \end{align}
    for $\mu\leq\frac{1}{2\delta}$.
\end{lemma}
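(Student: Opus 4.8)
The plan is to exploit the two-block structure of both $\vb_i$ and $\hb_i$ together with the spectral properties granted by Assumption~\ref{assump:comb-mat}. Since $\Ihat=(I_K-\scallones)\otimes I_M$ is an orthogonal projection and each $\A_i$ is symmetric, doubly-stochastic with spectral radius one, one has $\norm{\Ahat_i}=\norm{\Ihat\A_i}\le 1$, $\norm{\Ahat_i\Ahat_{i-1}}\le 1$, $\norm{I-\A_{i-1}}\le 2$, and hence $\norm{\Ahat_i(I-\A_{i-1})}\le 2$; moreover $\A_i$ and $\Ihat$ commute, since $A_i\one=\one$. For $\vb_i=\big[(\Ahat_i\Ahat_{i-1})\transp,\,(\Ahat_i(I-\A_{i-1}))\transp\big]\transp\s_i(\w_{i-1})$, conditional unbiasedness is immediate: the prefactor is deterministic given $\w_{i-1}$, so $\expectfi{\vb_i}=0$ follows from Assumption~\ref{assump:gradnoise}. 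I would then bound $\norm{\vb_i}^2$ by the sum of the squared block norms, factor out the operator norms above to obtain $\norm{\vb_i}^2\le c\,\norm{\s_i(\w_{i-1})}^2$ for an absolute constant $c$, and finally substitute the stacked gradient-noise bound~\eqref{eq:stacked-gradnoise-var} to arrive at~\eqref{eq:vi-var}.

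For $\hb_i$ the argument is deterministic in its first two terms. I would again split $\norm{\hb_i}^2$ across its two blocks and, wherever the gradient increment $\gradcal(\w_{i-1})-\gradcal(\w_{c,i-1})$ appears, invoke Lipschitz smoothness~\eqref{eq:assump-lipsch} to replace it with $\delta\norm{\w_{i-1}-\w_{c,i-1}}=\delta\norm{\what_{i-1}}$; combined with $\norm{\Ahat_i\Ahat_{i-1}}\le1$ and $\norm{\Ahat_i(I-\A_{i-1})}\le 2$ this produces the $\delta^2\norm{\what_{i-1}}^2$ contributions. The only term not directly of this form is the centroid increment $\gradcal(\w_{c,i})-\gradcal(\w_{c,i-1})$ in the second block.

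To control it I would first establish the centroid recursion. Premultiplying~\eqref{eq:aug-dgm} by $\onekronim$ and using double-stochasticity together with the initialization $\gb_{k,0}=\mu\widehat{\nabla J}_k(\wb_{k,0})$ gives the gradient-tracking invariant $\onekronim\g_i=\mu\,\onekronim\gradappcal(\w_i)$, whence $\wb_{c,i}-\wb_{c,i-1}=-\mu\,\onekronim\gradappcal(\w_{i-1})$. By Lipschitz smoothness the centroid increment is therefore $O(\mu)$, so after squaring it contributes a factor $\mu^2$. Taking conditional expectation, I would bound the averaged stochastic gradient by separating its mean and noise parts: the noise part is handled by~\eqref{eq:avg-stacked-gradnoise-var}, while the mean part $\onekronim\gradcal(\w_{i-1})$ is controlled by writing $\wb_{k,i-1}-w^o=\what_{k,i-1}-\centerr_{c,i-1}$, using $\frac1K\sum_k\nabla J_k(w^o)=\nabla J(w^o)=0$, and applying~\eqref{eq:assump-lipsch}. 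This yields a bound in terms of $\norm{\what_{i-1}}^2$, $\norm{\centerr_{c,i-1}}^2$, $\zeta^2$ and $\sigma^2$, all carrying an overall $\mu^2\delta^2$ prefactor. Invoking $\mu\le\frac1{2\delta}$ to bound $\mu^2\delta^2\le\frac14$ then absorbs these into absolute constants, and collecting all contributions gives a bound of the form~\eqref{eq:h-bound}.

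The main obstacle is precisely this centroid-drift term $\gradcal(\w_{c,i})-\gradcal(\w_{c,i-1})$. Unlike the other terms, $\w_{c,i}$ is random even conditioned on $\w_{i-1}$ and couples the consensus error to the centroid error, so it cannot be bounded by the current disagreement alone. Handling it forces the detour through the centroid recursion, the separation of deterministic and stochastic parts, and the step-size restriction $\mu\le\frac1{2\delta}$, which guarantees the term is genuinely higher-order; this is the concrete manifestation of the two-timescale coupling flagged in the overview.
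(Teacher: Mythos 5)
Your proposal is correct and follows the paper's proof in all essentials: the same block-wise splitting of $\vb_i$ and $\hb_i$, the same spectral-norm bounds from Assumption~\ref{assump:comb-mat}, the same reduction of the centroid increment $\gradcal(\w_{c,i})-\gradcal(\w_{c,i-1})$ via the centroid recursion~\eqref{eqn:centroid-evolution} (your gradient-tracking invariant $\onekronim\g_i=\mu\onekronim\gradappcal(\w_i)$ is exactly the paper's statement $\onekronim\y_i=0$), the same mean/noise separation using unbiasedness together with~\eqref{eq:avg-stacked-gradnoise-var}, and the same use of $\mu\leq\frac{1}{2\delta}$ to absorb the $\mu^2\delta^2$ factors. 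The one place you genuinely deviate is the bound on the mean part $\onekronim\gradcal(\w_{i-1})$: you center each $\nabla J_k$ at $w^o$ and invoke aggregate optimality $\nabla J(w^o)=0$, which yields only $\norm{\what_{i-1}}^2$ and $\norm{\centerr_{c,i-1}}^2$ contributions, whereas the paper adds and subtracts $\gradcal(\w_{c,i-1})$ and then bounds $\norm{\onekronim\gradcal(\w_{c,i-1})}^2$ through the per-agent minimizers $w_k^o$ (using $\nabla J_k(w_k^o)=0$), which is precisely what injects an extra $\delta^2\zeta^2$ heterogeneity term into its intermediate bound. Your variant is slightly cleaner on this step: heterogeneity then enters~\eqref{eq:h-bound} only through the gradient-noise variance, which is consistent with the $\frac{3}{2}\beta^2\zeta^2$ coefficient in the statement, and it loses nothing in generality. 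The only caveat is that you leave the constants as ``an absolute constant $c$'' / ``a bound of the form~\eqref{eq:h-bound}''; recovering the exact numbers $9,9,9,3$ and $3(2\delta^2+\tfrac{1}{2}\beta^2)$ etc.\ requires the same careful bookkeeping of $\norm{I-\A_{i-1}}$ and $\norm{\A_i(I-\A_{i-1})}$ that the paper performs, so your argument establishes the lemma up to such constant factors rather than verbatim.
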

\begin{proof}
    See Appendix~\ref{app:cons-err-relation}.
\end{proof}

The characterization of the consensus error is presented below. It bounds the consensus error at time \( i \) in terms of the consensus error at the beginning of the last full consensus sequence as well as the consensus and centroid errors encountered throughout the sequence. The two separate timescales of the bound introduces challenges in the downstream analysis, but is necessary to establish tight links between the consensus error of the FTC sequence and the distance of the centroid to the globally optimal model \( w^o \). In Sec.~\ref{sec:main-res} further below we show how these two timescales can be reconciled.
\begin{lemma}[Consensus Error]\label{lemma:consensus-bound}
    Under Assumptions~\eqref{assump:regularity}-\eqref{assump:comb-mat}, for ${0\leq\epstau<1}$ and $\mu\leq\frac{1}{2\delta}$, the consensus error is bounded by:
    \begin{subequations}
        \begin{align}\label{eq:lemma:consensus-bound}
            \expectnorm{\xhat_i}&\leq \theta_1\expectnorm{\xhat_{m\tau}} +\theta_2\sum_{\mathclap{j=m\tau}}^{i-1}\expectnorm{\xhat_j}\mathlinebreak\qquad+\theta_3\sum_{\mathclap{j=m\tau}}^{i-1}\expectnorm{\centerr_{c,j}}+\theta_4 
        \end{align}
    where :
    \begin{align}
    \    \theta_1 &\triangleq \frac{\epstau}{2}(1+\epstau)        \\
    \theta_2 &\triangleq    \frac{3\tau(4\delta^2+\beta^2)(1+\epstau)\mu^2}{1-\epstau}+18\tau\beta^2\mu^2    \\
    \theta_3 &\triangleq \frac{4\tau(\delta^2+\beta^2K)(1+\epstau)\mu^2}{1-\epstau} +18\tau\beta^2K\mu^2        \\
    \theta_4 &\triangleq 2\tau\left(\frac{3\tau\beta^2(1+\epstau)}{1-\epstau}+18\tau\beta^2\right)\mu^2\zeta^2 \mathlinebreak\quad+ 2\tau\left(6\tau+\frac{1+\epstau}{1-\epstau}\right)\mu^2\sigma^2 
    \end{align}
    \end{subequations}
\end{lemma}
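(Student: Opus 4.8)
The plan is to bound the squared norm of the unrolled recursion~\eqref{eqn:cons-err-mtau} term by term, after isolating the zero-mean gradient-noise contributions from the deterministic drift. Writing $\xhat_i = G_{i:m\tau+1}\xhat_{m\tau}-\mu\sum_{j}G_{i:j+1}\hb_j-\mu\sum_j G_{i:j+1}\vb_j$ with the convention $G_{i:i+1}=I$, I would first invoke the conditional unbiasedness of $\vb_j$ from Lemma~\ref{lemma:v-h-bound} together with the pairwise-uncorrelatedness in Assumption~\ref{assump:gradnoise}. Conditioning successively on $\w_{i-1}$ and applying the tower rule, every cross-term between the noise sum and the drift/initial terms, and every cross-term between noise contributions at distinct times, drops out. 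This leaves $\expectnorm{\xhat_i}$ as a ``drift'' part coming from $G_{i:m\tau+1}\xhat_{m\tau}$ and the $\hb_j$ sum, plus a purely additive noise part $\mu^2\sum_j\expectnorm{G_{i:j+1}\vb_j}$.

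The central structural step, and the one that turns the approximate-FTC hypothesis into a genuine contraction, is the evaluation of the combination-matrix product over a complete cycle. Using the stated identities $\Ahat_i=\Ihat\A_i=\Ahat_i\Ihat$ together with $A_i\scallones=\scallones A_i=\scallones$, one shows $\Ahat_i=\A_i-\scallones\otimes I_M$, and then a short telescoping argument gives $\Ahat_\tau\cdots\Ahat_1=(A_\tau\cdots A_1-\scallones)\otimes I_M$, whose spectral norm is exactly $\epstau$ by~\eqref{eq:approx-ftc}. Since $G_{i:m\tau+1}$ spans between $\tau$ and $2\tau-1$ factors and always contains one such complete cycle at its right end, its diagonal blocks reduce to a product $B\triangleq\Ahat_i\cdots\Ahat_{m\tau+1}$ with $\norm{B}\le\epstau$, the remaining partial-cycle factors each contributing a factor of at most one. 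Propagating the block-upper-triangular form of $G_i$ through the product yields $G_{i:m\tau+1}=\bigl[\begin{smallmatrix}B & C\\ 0 & B\end{smallmatrix}\bigr]$ with an off-diagonal block $C$ that accumulates linearly in the number of factors, and it is this accumulation that seeds the factors of $\tau$ in $\theta_2$--$\theta_4$.

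With these operator-norm bounds in hand, I would apply Cauchy--Schwarz to the finite drift sum, whose length is at most $2\tau$, to convert $\norm{\sum_j(\cdot)}^2$ into $2\tau\sum_j\norm{\cdot}^2$, and then bound each summand by the product of the relevant $\norm{G_{i:j+1}}^2$ with the perturbation estimate~\eqref{eq:h-bound}; the noise part is handled by direct summation of $\norm{G_{i:j+1}}^2$ against~\eqref{eq:vi-var}, no Cauchy--Schwarz being needed there since the cross-terms already vanished. Taking total expectations and grouping the outcome according to whether each piece multiplies $\expectnorm{\xhat_{m\tau}}$, $\sum_j\expectnorm{\xhat_j}$, $\sum_j\expectnorm{\centerr_{c,j}}$, or a constant, produces the four coefficients; the $\tfrac{1+\epstau}{1-\epstau}$ factors arise from a Young's-inequality split tuned to the contraction $\epstau$, while the additive $18\tau\beta^2$-type terms track the gradient-noise coefficients of~\eqref{eq:vi-var} after the length-$2\tau$ summation. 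The restriction $\mu\le\frac{1}{2\delta}$ is inherited directly from Lemma~\ref{lemma:v-h-bound}.

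The main obstacle is the book-keeping forced by the block-triangular structure of $G_{i:m\tau+1}$. Although its diagonal blocks inherit the clean $\epstau$ contraction from the telescoping identity, the off-diagonal block $C$ grows linearly with the window length and therefore threatens to inject spurious powers of $\tau$ (and to destroy the $\epstau$-order) into the coefficient of the initial error $\expectnorm{\xhat_{m\tau}}$. Controlling this term so that the initial-error coefficient stays at $\theta_1=O(\epstau)$ while the window-length growth is absorbed into the running-sum coefficients $\theta_2,\theta_3$ at order $\tau\mu^2$ is the delicate point, and it is tightly coupled to the two-timescale indexing, with $\xhat_{m\tau}$ at the start of the last full cycle appearing alongside the running errors $\xhat_j$ and $\centerr_{c,j}$ throughout it. Reconciling these two timescales is precisely what is deferred to the combination with the centroid recursion in Sec.~\ref{sec:main-res}.
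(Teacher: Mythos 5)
Your skeleton is the paper's: unroll~\eqref{eqn:cons-err-mtau} back to $m\tau$, remove noise cross-terms by conditional unbiasedness, exploit the full-cycle contraction $\norm{\Ahat_{i:m\tau+1}}\leq\epstau$, invoke Lemma~\ref{lemma:v-h-bound}, and count at most $2\tau$ summands. Your route to the coefficients (a single Young split tuned so that $1+\alpha^{-1}=\frac{1+\epstau}{1-\epstau}$, plus Cauchy--Schwarz over the length-$2\tau$ drift sum) differs from the paper's cascade of Young's inequalities with parameters $\alpha_j=\frac{1}{\gamma-j}$ and $\gamma=2\tau\frac{1+\epstau}{1-\epstau}$, but it would reproduce the same $\theta_2,\theta_3,\theta_4$; that difference is cosmetic. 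The problems lie elsewhere.

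The decisive gap is exactly the step you call ``the delicate point'' and then leave open. By induction, $G_{i:m\tau+1}=\bigl[\begin{smallmatrix}\Ahat_{i:m\tau+1} & -(i-m\tau)\Ahat_{i:m\tau+1}\\ 0 & \Ahat_{i:m\tau+1}\end{smallmatrix}\bigr]$, and the off-diagonal block multiplies $\zhat_{m\tau}$, a component of the initial error $\xhat_{m\tau}$. Its contribution can therefore only land in $\theta_1$: it cannot be ``absorbed into the running-sum coefficients $\theta_2,\theta_3$'', which multiply the entirely different quantities $\sum_j\expectnorm{\xhat_j}$ and $\sum_j\expectnorm{\centerr_{c,j}}$, and it cannot be ``deferred'' to Sec.~\ref{sec:main-res}, because the lemma itself asserts $\theta_1=\frac{\epstau}{2}(1+\epstau)$, so the matter must be settled inside this proof. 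You also miss the structural fact that controls this block: it is a scalar multiple of the diagonal block, $-(i-m\tau)\Ahat_{i:m\tau+1}$, so the full-cycle contraction applies to it as well and the $\epstau$-order is never ``destroyed''; at worst a factor of $\tau$ enters, giving $\theta_1=\mathcal{O}(\tau^2\epstau^2)$. The paper removes even that factor by invoking the property (citing~\cite{simovici}) that the spectral norm of this block upper-triangular product is governed by its diagonal blocks, which yields $\norm{G_{i:m\tau+1}}\leq\epstau$ and, by the same token, $\norm{G_{i:j+1}}\leq 1$ for the partial products --- a bound your Cauchy--Schwarz step also requires but never justifies. Whatever one makes of that cited step, your proposal contains no substitute for it, so it cannot deliver the stated $\theta_1$. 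Relatedly, your claim that the growth of the off-diagonal block ``seeds the factors of $\tau$ in $\theta_2$--$\theta_4$'' is wrong: in both the paper's argument and your own, those factors come from the Young/Cauchy--Schwarz weights and the $\leq 2\tau$-term summations; the off-diagonal block contributes nothing to them.

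A second, smaller gap: your blanket decoupling of the noise is invalid as stated. It is not true that every cross-term between the noise and the drift terms vanishes, because $\hb_{j'}$ depends on $\s_j$ for all $j\leq j'$ (through $\w_{j'-1}$ and, via the centroid recursion~\eqref{eqn:centroid-evolution}, through $\w_{c,j'}$), so $\expect{\vb_j\transp M\,\hb_{j'}}$ need not vanish when $j'\geq j$. Only the pairings of $\vb_j$ with strictly earlier drift terms, with other noise terms, and with $G_{i:m\tau+1}\xhat_{m\tau}$ drop out. This is precisely why the paper peels terms one at a time from the most recent backwards, alternating an exact separation of $\vb_j$ with a Young split for $\hb_j$, which confines any residual correlation to same-index pairs; your all-at-once orthogonality claim would need an additional argument (e.g., showing the surviving cross-terms are higher order in $\mu$ and absorbing them), which the proposal does not supply.
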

\begin{proof}
    See Appendix~\ref{app:cons-err-relation}.
\end{proof} 

The first term in the consensus error bound in~\eqref{eq:lemma:consensus-bound} depends on the consensus error at the end of the last full FTC sequence. This term arises due to the use of approximate sequences, disappearing in the perfect FTC case ($\epstau=0$). The second and third terms in~\eqref{eq:lemma:consensus-bound} indicate that the errors accumulate over the course of the sequence. Longer sequences allow local models to drift further apart, which can have a detrimental effect on overall learning performance when \( \tau \) is very large. We further discuss this in Section~\ref{sec:discussion}.

\vspace{-8pt}\subsection{Centroid Error}\label{sec:centroid-err}
The behavior of the centroid is described by:
\begin{align}\label{eqn:centroid-evolution}
    \wb_{c,i}=\wb_{c,i-1}-\frac{\mu}{K}(\one\transp\otimes I_M)(\gradcal(\w_{i-1})+\s_{i}(\w_{i-1})) 
\end{align}
which follows by premultiplying~\eqref{eq:aug-dgm-y:w} by $\onekronim$, noting that $\onekronim\y_i=0$ (see Appendix~\ref{app:cent-err}). The behavior of the centroid error is bound below.

\begin{lemma}[Centroid Error]\label{lemma:centroid_bound}
    Under Assumptions~\eqref{assump:regularity}-\eqref{assump:comb-mat} for $\mu\leq\frac{\nu}{\delta^2}$, the centroid error $\centerr_{c,i}$ is bounded by:
\begin{subequations}
    \begin{align}
    \expectnorm{\centerr_{c,i}}&\leq\alpha_1\expectnorm{\centerr_{c,i-1}} + \alpha_2\expectnorm{\xhat_{i-1}}+\alpha_3
\end{align}
where:
\begin{align}
    \alpha_1 &\triangleq  \textcolor{black}{1-\frac{\nu\mu}{2}}  \\
    \alpha_2 &\triangleq  \frac{2\delta^2\mu}{\nu K}+\mathcal{O}(\mu^2) \\
    \alpha_3 &\triangleq  \frac{3\beta^2\mu^2}{K}\zeta^2+\frac{\mu^2}{K}\sigma^2
    \end{align}
    \end{subequations}
\end{lemma}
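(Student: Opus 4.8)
The plan is to derive a one-step contraction for the scalar centroid error directly from the centroid recursion~\eqref{eqn:centroid-evolution}, treating the optimization descent and the gradient-noise fluctuation separately through a conditional-expectation argument. First I would rewrite~\eqref{eqn:centroid-evolution} in terms of the error by subtracting both sides from $w^o$, giving
\begin{equation}
  \centerr_{c,i}=\centerr_{c,i-1}+\frac{\mu}{K}\sum_{k=1}^{K}\nabla J_k(\wb_{k,i-1})+\mu\,\onekronim\s_i(\w_{i-1}).
\end{equation}
Conditioning on $\w_{i-1}$ and invoking the conditional unbiasedness of the noise (Assumption~\eqref{assump:gradnoise}), the cross term vanishes, so $\expectfi{\norm{\centerr_{c,i}}^2}$ splits into the squared norm of the deterministic descent direction plus $\mu^2\expectnormfi{\onekronim\s_i(\w_{i-1})}$, the latter being controlled directly by~\eqref{eq:avg-stacked-gradnoise-var}.

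For the deterministic part I would add and subtract the aggregate gradient at the centroid, writing $\tfrac1K\sum_k\nabla J_k(\wb_{k,i-1})=\nabla J(\wb_{c,i-1})+d_{i-1}$, where $d_{i-1}\triangleq\tfrac1K\sum_k\big(\nabla J_k(\wb_{k,i-1})-\nabla J_k(\wb_{c,i-1})\big)$ collects the effect of disagreement. Lipschitz smoothness~\eqref{eq:assump-lipsch} together with Cauchy--Schwarz bounds this by $\norm{d_{i-1}}\le \tfrac{\delta}{\sqrt K}\norm{\what_{i-1}}$. The ``clean'' descent term $\norm{\centerr_{c,i-1}+\mu\nabla J(\wb_{c,i-1})}^2$ is handled by the standard strongly-convex/smooth estimate: using $\nabla J(w^o)=0$, $\nu$-strong convexity and $\delta$-smoothness yield $\norm{\centerr_{c,i-1}+\mu\nabla J(\wb_{c,i-1})}^2\le(1-2\nu\mu+\delta^2\mu^2)\norm{\centerr_{c,i-1}}^2\le(1-\nu\mu)\norm{\centerr_{c,i-1}}^2$ whenever $\mu\le\nu/\delta^2$. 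To fold in the disagreement contribution I would apply Young's inequality $\norm{a+b}^2\le(1+c)\norm a^2+(1+c^{-1})\norm b^2$ with $c=\nu\mu/2$; this produces the coefficient $(1+c)(1-\nu\mu)\le 1-\nu\mu/2=\alpha_1$ on $\norm{\centerr_{c,i-1}}^2$ and, from $(1+c^{-1})\mu^2\norm{d_{i-1}}^2$, a coefficient $\tfrac{2\delta^2\mu}{\nu K}+\mathcal O(\mu^2)=\alpha_2$ multiplying $\norm{\what_{i-1}}^2\le\norm{\xhat_{i-1}}^2$.

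Finally I would add the noise variance from~\eqref{eq:avg-stacked-gradnoise-var}: its $\tfrac{3\beta^2\mu^2}{K}\zeta^2+\tfrac{\mu^2}{K}\sigma^2$ part is exactly $\alpha_3$, while the state-dependent pieces $3\beta^2\mu^2\norm{\centerr_{c,i-1}}^2$ and $\tfrac{3\beta^2\mu^2}{K}\norm{\what_{i-1}}^2$ are $\mathcal O(\mu^2)$ and are absorbed into $\alpha_1$ and $\alpha_2$ respectively, the former exploiting the negative $\mathcal O(\mu^2)$ slack left by $(1+c)(1-\nu\mu)$ for $\mu$ small enough. Taking total expectations via the tower rule then gives the stated recursion. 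I expect the main obstacle to be the bookkeeping in the Young's-inequality step: the parameter $c$ must be tuned (here $c=\Theta(\mu)$) so that the contraction factor lands exactly at $1-\nu\mu/2$ while simultaneously producing the advertised $\tfrac{2\delta^2\mu}{\nu K}$ scaling in $\alpha_2$ and leaving enough $\mathcal O(\mu^2)$ slack to swallow every gradient-noise perturbation; confirming that a single step-size threshold compatible with $\mu\le\nu/\delta^2$ makes all these absorptions valid at once is the delicate part.
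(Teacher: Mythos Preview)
Your proposal is correct and mirrors the paper's own proof almost step for step: separate the noise via conditional unbiasedness and~\eqref{eq:avg-stacked-gradnoise-var}, add and subtract $\nabla J(\wb_{c,i-1})$, apply the standard strongly-convex/smooth contraction to the clean descent term, and use a Young/Jensen split to attach the disagreement term bounded via~\eqref{eq:assump-lipsch}. The only cosmetic difference is the splitting parameter---the paper takes $\alpha=\sqrt{1-2\nu\mu+\delta^2\mu^2}$ in the convexity form $\norm{a+b}^2\le\tfrac1\alpha\norm a^2+\tfrac1{1-\alpha}\norm b^2$, whereas you use $c=\nu\mu/2$ in the equivalent Young form---and both land on the same $\alpha_1,\alpha_2,\alpha_3$; your closing caveat about absorbing the $\mathcal O(\mu^2)$ noise pieces is exactly the loose end the paper leaves implicit as well.
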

\begin{proof}
    \textcolor{black}{See Appendix~\ref{app:cent-err}}.
\end{proof}

\vspace{-20pt}\subsection{Main Result}\label{sec:main-res}
Below we combine the consensus error and centroid error bounds from Lemmas~\eqref{lemma:consensus-bound} and~\eqref{lemma:centroid_bound} to derive the main result. The key challenge arises from the structure of the recursive dependencies: the centroid error bound at iteration $i$ depends only on the error of the previous iteration, while the consensus error bound depends on the sum of errors across the full FTC sequence from time $m\tau$ to $i-1$. Previous gradient-tracking analyses with perfect FTC ($\epstau=0$) tackled this by considering the average consensus error terms from iteration zero to $i$~\cite{Cesar}. However, when approximate sequences are employed ($\epstau>0$), this series will not converge. Instead, we will consider the maximum consensus error within each FTC sequence, resulting in a bound characterizing the maximum error over the $\tau$ iterations of any sequence. 

Using the shorthand $\mathcal{S}_{m}$ to denote the {$m$-th} sequence, or the set of integers ${\{(m-1)\tau,\cdots, m\tau-1\}}$ and taking the maximum of both sides of~\eqref{eq:lemma:consensus-bound} over the FTC sequence ${i\in\mathcal{S}_{m+2}}$ is:
\begin{align}
    &\max_{i\in\mathcal{S}_{m+2}} \expectnorm{\xhat_i} \leq \max_{i\in\mathcal{S}_{m+2}}\Big( \theta_1\expectnorm{\xhat_{m\tau}} +\theta_2\sum_{\mathclap{j=m\tau}}^{i-1}\expectnorm{\xhat_j}\mathlinebreak\qquad+\theta_3\sum_{\mathclap{j=m\tau}}^{i-1}\expectnorm{\centerr_{c,j}}+\theta_4  \Big) \\
    &\stackrel{(a)}{=} \theta_1\expectnorm{\xhat_{m\tau}} +\theta_2\sum_{\mathclap{j=m\tau}}^{\mathclap{(m+2)\tau-2}}\expectnorm{\xhat_j}+\theta_3\sum_{\mathclap{j=m\tau}}^{\mathclap{(m+2)\tau-2}}\expectnorm{\centerr_{c,j}}+\theta_4 \\
    &\stackrel{(b)}{\leq} \theta_1\expectnorm{\xhat_{m\tau}} + \tau\theta_2\max_{~\mathclap{i\in\mathcal{S}_{m+1}}}\expectnorm{\xhat_i} + (\tau-1)\theta_2\max_{~\mathclap{i\in\mathcal{S}_{m+2}}}\expectnorm{\xhat_i}\nonumber\\&\quad+\tau\theta_3\max_{~\mathclap{i\in\mathcal{S}_{m+1}}}\expectnorm{\centerr_{c,i}} + (\tau-1)\theta_3\max_{~\mathclap{i\in\mathcal{S}_{m+2}}}\expectnorm{\centerr_{c,i}}+\theta_4
\end{align}
where $(a)$ follows by adjusting the summation limits to take the maximum value of $i$ over the indices, and in $(b)$ we split the summation into the ranges ${m\tau\leq j\leq (m+1)\tau-1}$ and ${(m+1)\tau\leq j\leq (m+2)\tau-1}$, replacing each with the maximum over the summation. 

For notational convenience, let the maximum centroid error over the $m$-th sequence be $\omega_{m}=\max_{i\in\mathcal{S}_{m}}\expectnorm{\centerr_{c,i}}$ and let ${\chi_{m}=\max_{i\in\mathcal{S}_{m}}\expectnorm{\xhat_i}}$ be the maximum consensus error over the {$m$-th} sequence so that the maximum consensus error bound becomes:
\begin{align} \label{eqn:max-cons-err}
    \chi_{m+2}&\leq \theta_1\chi_{m+1} + \tau\theta_2\chi_{m+1} + (\tau-1)\theta_2\chi_{m+2} + \tau\theta_3\omega_{m+1}\mathlinebreak\quad+(\tau-1)\theta_3\omega_{m+2}+\theta_4
    \end{align}
The maximum centroid error over the $m$-th sequence is derived in a similar fashion, the details of which are found in Appendix~\ref{app:main-res}, leading to the recursion:
\begin{align}\label{eqn:max-cent-err}
    \omega_{m+2}   \leq \alpha_1^\tau\omega_{m+1}+\alpha_2\tau\chi_{m+1}+\alpha_2\tau\chi_{m+2} + \tau\alpha_3
\end{align}

We can establish a coupled recursion from~\eqref{eqn:max-cons-err} and~\eqref{eqn:max-cent-err} (see Appendix~\ref{app:main-res}):\\~\\
\begin{align}\label{eqn:coupled-recursion}
    \begin{bmatrix}
        \omega_m \\
        \chi_m
    \end{bmatrix} &\leq \begin{bmatrix}
        \alpha_1^\tau & \alpha_2\tau(1+\gamma\theta_1) \\
        \frac{3}{2}\tau\theta_3 & \frac{3}{2}(\theta_1+\tau\theta_2)
    \end{bmatrix} \begin{bmatrix}
        \omega_{m-1} \\
        \chi_{m-1}
    \end{bmatrix}\mathlinebreak\qquad+\begin{bmatrix}
        \tau\alpha_3 \\
        \frac{3}{2}\theta_4 
    \end{bmatrix}+\mathcal{O}(\mu^3)
\end{align} 

From~\eqref{eqn:coupled-recursion} we derive the mean-square deviation (MSD) for the recursion, defined as $\expectnorm{\w_i-\one\otimes w^o}$.\\~\\

\begin{theorem}\label{thm:main-res}
\begin{subequations}
    Under Assumptions~\eqref{assump:regularity}-\eqref{assump:comb-mat} and for ${\mu\leq \min\left( \frac{\nu\sqrt{K}(1-\epstau)}{72\tau\delta\sqrt{\delta^2+\beta^2}}, \frac{2}{\nu\tau}\right)}$ the maximum expected MSD over the $m$-th FTC sequence for the Aug-DGM recursion is given by:
    \begin{align}
        &\max_{i \in \mathcal{S}_m} \expectnorm{\w_i-\one\otimes w^o} \leq \gamma^{m-1}c_1+\mathlinebreak\qquad\mathcal{O}\Bigg( \frac{\mu(\sigma^2+\beta^2\zeta^2)}{\nu K} +\frac{\mu^2\tau^2\delta^2\left(1+\epstau\right)(\sigma^2+\beta^2\zeta^2)}{\nu^2 K\left(1-\epstau\right)^2}\mathlinebreak\qquad +\frac{\mu^2\tau^2(1+\epstau)(\sigma^2+\beta^2\zeta^2)}{(1-\epstau)^2}\Bigg)+\mathcal{O}\left(\mu^3\right) \label{eq:thm:main-res}
    \end{align}
    where \textcolor{black}{${\gamma=1-\frac{\tau\nu\mu}{8}+\frac{3\tau\nu\epstau(1+\epstau)\mu}{32}+\mathcal{O}(\mu^3)}$ for ${0\leq\epstau\leq\frac{3}{4}}$}. Here $c_1$ represents the initial consensus error and centroid error terms in the first FTC period: ${c_1\triangleq2\kappa\sqrt{2}(\omega_1+\chi_1)}$ for $\kappa\in\mathbb{R}_{\geq 1}$. Details of the proof are found in Appendix \ref{app:main-res}.
    \end{subequations}
\end{theorem}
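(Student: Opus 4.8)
\emph{Reduction of the MSD.} The first step is to express the quantity of interest in terms of the two errors already controlled by the coupled recursion~\eqref{eqn:coupled-recursion}. Writing $\w_i-\one\otimes w^o=\what_i-\one\otimes\centerr_{c,i}$ and observing that $\what_i=\Ihat\w_i$ lies in the range of $\Ihat$ while $\one\otimes\centerr_{c,i}$ lies in the orthogonal consensus subspace, a Pythagorean identity gives $\expectnorm{\w_i-\one\otimes w^o}=\expectnorm{\what_i}+K\expectnorm{\centerr_{c,i}}$. Since $\expectnorm{\what_i}\le\expectnorm{\xhat_i}$ and the maximum of a sum is at most the sum of the maxima, taking the maximum over $i\in\mathcal{S}_m$ yields $\max_{i\in\mathcal{S}_m}\expectnorm{\w_i-\one\otimes w^o}\le\chi_m+K\omega_m$. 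It therefore suffices to bound $\chi_m$ and $\omega_m$.

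\emph{Unrolling the recursion.} Denoting by $\Gamma$ the $2\times2$ coefficient matrix and by $b=[\tau\alpha_3,\tfrac32\theta_4]\transp$ the constant vector in~\eqref{eqn:coupled-recursion}, I would iterate entrywise from the first sequence to obtain $[\omega_m,\chi_m]\transp\le\Gamma^{m-1}[\omega_1,\chi_1]\transp+\big(\sum_{k=0}^{m-2}\Gamma^k\big)b+\mathcal{O}(\mu^3)$. The transient is governed by $\Gamma^{m-1}$ and the steady state by the Neumann series $\sum_k\Gamma^k$, which converges to $(I-\Gamma)^{-1}$ exactly when $\rho(\Gamma)<1$.

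\emph{Spectral radius and transient.} The crux is to show $\rho(\Gamma)\le\gamma<1$ under the stated step-size condition. As $\Gamma$ is entrywise nonnegative, $\rho(\Gamma)=\tfrac12\big[(\Gamma_{11}+\Gamma_{22})+\sqrt{(\Gamma_{11}-\Gamma_{22})^2+4\,\Gamma_{12}\Gamma_{21}}\big]$; since $\Gamma_{12}=\mathcal{O}(\tau\mu)$ and $\Gamma_{21}=\tfrac32\tau\theta_3=\mathcal{O}(\tau^2\mu^2)$, the coupling product $\Gamma_{12}\Gamma_{21}=\mathcal{O}(\tau^3\mu^3)$ is negligible and the eigenvalues track the diagonal entries. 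The top-left entry $\alpha_1^\tau=(1-\tfrac{\nu\mu}{2})^\tau$ sets the centroid contraction (yielding $1-\tfrac{\tau\nu\mu}{8}$ after absorbing the coupling slack when $\epstau=0$), while the bottom-right entry $\tfrac32(\theta_1+\tau\theta_2)$, dominated by $\theta_1=\tfrac{\epstau}{2}(1+\epstau)$, controls the consensus mode and is held strictly below one by the restriction $\epstau\le\tfrac34$. Bounding $\Gamma^{m-1}$ through its eigendecomposition produces the geometric factor $\gamma^{m-1}$ together with the conditioning constant $\kappa$ and a norm-equivalence factor $\sqrt2$, giving the transient $c_1=2\kappa\sqrt2(\omega_1+\chi_1)$.

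\emph{Steady state and the main obstacle.} Finally I would evaluate $(I-\Gamma)^{-1}b$. To leading order $\det(I-\Gamma)\approx(1-\alpha_1^\tau)\big(1-\tfrac32\theta_1\big)=\Theta(\tau\nu\mu)$, the cross term being $\mathcal{O}(\tau^3\mu^3)$. Dividing the adjugate times $b$ by this determinant and substituting $\alpha_2=\tfrac{2\delta^2\mu}{\nu K}$, $\alpha_3=\mathcal{O}\big(\tfrac{\mu^2(\sigma^2+\beta^2\zeta^2)}{K}\big)$ and $\theta_4=\mathcal{O}\big(\tau^2\mu^2\tfrac{1+\epstau}{1-\epstau}(\sigma^2+\beta^2\zeta^2)\big)$ recovers the three $\mathcal{O}(\cdot)$ terms of~\eqref{eq:thm:main-res}: the $\alpha_3$ contribution to $\omega$ gives $\tfrac{\mu(\sigma^2+\beta^2\zeta^2)}{\nu K}$, the $\alpha_2\theta_4$ cross contribution gives the $\tfrac{\mu^2\tau^2\delta^2(1+\epstau)(\sigma^2+\beta^2\zeta^2)}{\nu^2K(1-\epstau)^2}$ term, and the $\theta_4$ contribution to $\chi$ gives $\tfrac{\mu^2\tau^2(1+\epstau)(\sigma^2+\beta^2\zeta^2)}{(1-\epstau)^2}$. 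I expect the difficulty to lie in this last step together with the spectral-radius bound: one must keep every $\tfrac{1}{1-\epstau}$ factor under control, verify that the $\mathcal{O}(\mu^3)$ remainders never dominate the retained terms, and confirm that over $\epstau\le\tfrac34$ the denominator satisfies $1-\tfrac32\theta_1\ge c\,(1-\epstau)$ for a constant $c$, so that it contributes the extra factor $\mathcal{O}\big(\tfrac{1}{1-\epstau}\big)$ needed to produce the squared denominators in the statement.
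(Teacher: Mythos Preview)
Your proposal is correct and follows essentially the same route as the paper: reduce the MSD to $\omega_m$ and $\chi_m$, unroll the coupled recursion~\eqref{eqn:coupled-recursion}, bound $\rho(\Gamma)$ by exploiting $\Gamma_{12}\Gamma_{21}=\mathcal{O}(\mu^3)$ so the spectrum is governed by the diagonal entries, and read off the steady state from $(I-\Gamma)^{-1}b$, identifying the three $\mathcal{O}(\cdot)$ contributions exactly as you do. The only notable difference is the MSD reduction: the paper uses Jensen's inequality on the per-agent deviation $\tfrac{1}{K}\expectnorm{\w_i-\one\otimes w^o}$ to obtain $2\omega_m+2\chi_m$, whereas your orthogonal split of the unnormalized quantity gives $K\omega_m+\chi_m$; your decomposition is sharper, but the extra factor $K$ on $\omega_m$ would turn the leading term into $\mathcal{O}(\mu\sigma^2/\nu)$ rather than $\mathcal{O}(\mu\sigma^2/(\nu K))$, so to match the stated bound you should, like the paper, work with the $1/K$-normalized MSD.
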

The bound in~\eqref{eq:thm:main-res} bears resemblance to other decentralized algorithm bounds with the notable exception that the rate of convergence $\gamma$ is faster by a factor of $\tau$. The increase by a factor of $\tau$ arises from the fact that we study the recursion in steps of $\tau$ when considering the maximum error over the sequence. \textcolor{black}{If we amortize the convergence rate per individual iteration, this yields a rate of ${1-\Theta(\nu\mu)+\Theta(\nu\epstau\mu)}$}. Further discussion on the bound is provided in Sec.~\ref{sec:discussion}.

\vspace{-5pt}
\section{Simulations and Discussion}\label{sec:discussion}

\begin{center}
\begin{figure}[]
    \centering 
  \includegraphics[width=0.95\linewidth]{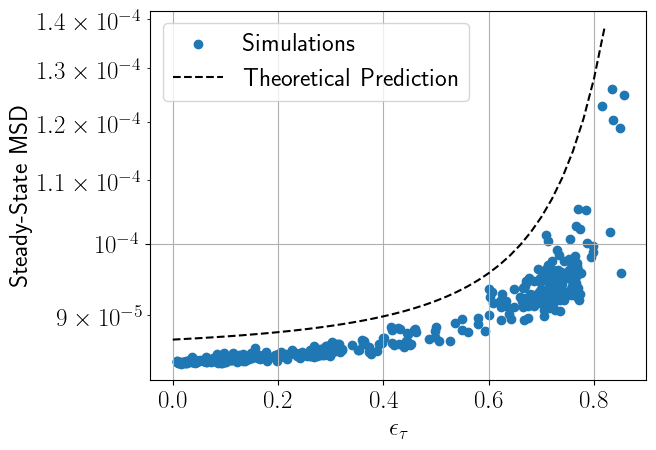}
\caption{\textcolor{black}{Steady-state error against $\epstau$ for randomly generated graphs (with $\tau$ fixed to 5) demonstrating the deterioration in performance with increasing values of $\epstau$. To simplify comparison of the trend the theoretical prediction has been shifted downwards.}}
\label{fig:msd-epstau}
\end{figure}
\end{center}

\vspace{-10pt}
In this section, we numerically verify some of the key trends predicted by the performance guarantees in Theorem~\ref{thm:main-res}. We consider a linear regression problem, where each agent is provided with $N$ labeled samples generated via a linear model ${\boldsymbol{\gamma}_{k,n}=\boldsymbol{h}_{k,n}\transp w^o+\boldsymbol{v}_n}$ with a least-squares objective:
\begin{align}
    J_k(w) = \frac{1}{2N}\sum_{n=1}^N (\gamma_{k,n}-h_{k,n}\transp w)^2
\end{align}
Here, the features $\boldsymbol{h}_{k,n}$ follow a normal distribution ${\boldsymbol{h}_{k,n}\sim\mathcal{N}(0,I_M)}$ while the noise terms are sampled as ${\boldsymbol{v}_{k,n}\sim\mathcal{N}(0,0.1)}$. We set $M=20$ and $N=30$.
Optimization is carried out using the Aug-DGM algorithm in~\eqref{eq:aug-dgm}. Stochastic gradients are employed by selecting a random sample $\boldsymbol{n}_i$ at each iteration $i$ and evaluating its gradient:
\begin{align}
    \widehat{\nabla J}_k(\wb_{k,i})=h_{k,\boldsymbol{n}_i}( h_{k,\boldsymbol{n}_i}\transp\wb_{k,i}-\gamma_{k,\boldsymbol{n}_i})
\end{align}

\textcolor{black}{Fig.~\ref{fig:msd-epstau} demonstrates the steady-state MSD against $\epstau$ for the optimization problem on randomly generated 8-node graphs. The graphs are sampled from three families: Erdős–Rényi (binomial) graphs with an edge probability ${p\sim U(0.1,0.9)}$, Barabasi-Albert graphs~\cite{albert02} with edge numbers ${m\sim U(1,6)}$, or random 8-node trees. Graph connectivity is verified, otherwise a new graph is sampled. The approximate FTC matrices are then generated using the iterative method described in~\cite{eusipco} with $\tau$ is set to 5. The resulting values of $\epstau$, vary depending on the solver's progress and whether the graph's actual consensus number is 5. This setup provides a realistic example of the numerical errors typically associated with generating FTC matrices. These learned approximate FTC matrices are then used in the Aug-DGM algorithm with stochastic gradients and $\mu=0.01$. In total, 1000 independent simulations were performed. The results in Fig.~\ref{fig:msd-epstau} clearly demonstrate the increased steady-state error with higher values of $\epstau$, corroborating the theoretical prediction that the MSD is proportional to ${\mathcal{O}\left(\frac{1}{(1-\epstau)^2}\right)}$. We note that the restriction of $\epstau$ to the range ${[0,0.75]}$ is needed to establish the analytical guarantees, but we observe in simulations that Aug-DGM can converge even for larger \( \epsilon_{\tau} \).}


\begin{center}
\begin{figure}[]
    \centering 
  \includegraphics[width=0.9\linewidth]{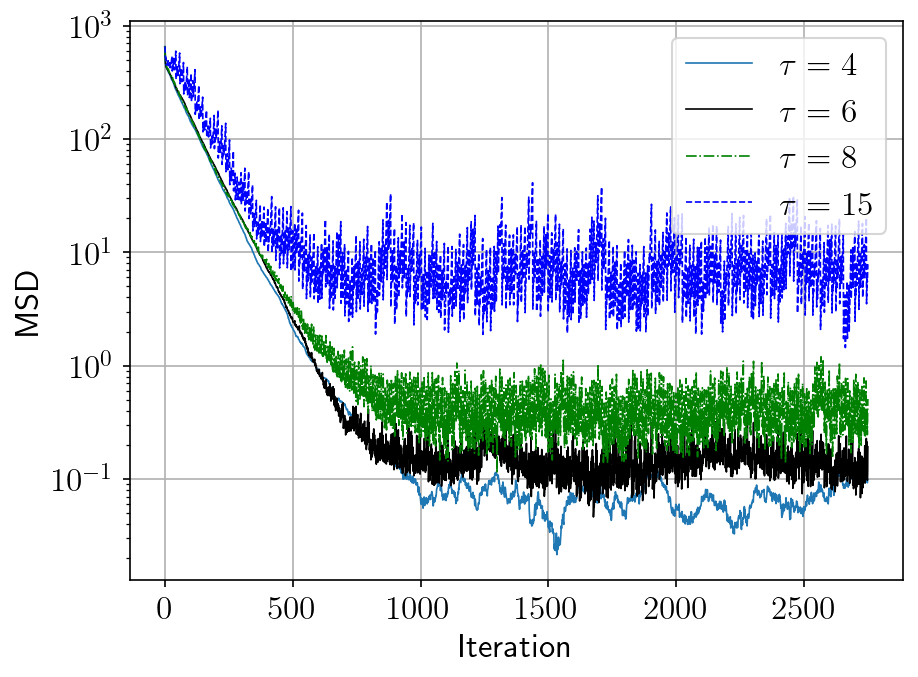}
\caption{\textcolor{black}{MSD for graphs with the same number of agents (${K=16}$), but varying values of $\tau$ demonstrating the performance deterioration from higher values of $\tau$.}}
\label{fig:incr-tau}
\end{figure}
\end{center}
\vspace{-20pt}
Increased values of $\tau$ also degrade the performance. The MSD in~\eqref{eq:thm:main-res} has a steady-state error that depends on \textcolor{black}{${\mathcal{O}(\mu^2\tau^2)}$} which we demonstrate in Fig.~\ref{fig:incr-tau}. \textcolor{black}{Here, we examine the same least-squares optimization problem on different 16-node graphs, where the exact FTC sequences are known ($\epstau=0)$: namely, a 16-hypercube~\cite{nguyen-gt}, $4\times4$ square-grid graph, 16-cycle, and 16-path. The FTC sequences for the cycle and path graphs are provided in the supplementary material. The grid graph's sequence is the concatenation of the FTC sequence for the 4-node paths acting first along the horizontal and then the vertical dimensions (leveraging the fact that the $4\times4$ square-grid graph is the Cartesian product of two 4-paths). The results in~\eqref{eq:thm:main-res} confirm that increasing $\tau$ worsens the performance, consistent with the theoretical predictions. }

We attribute the performance degradation caused by larger values of $\tau$ to the drift between agents’ local models. The $\tau-$connectivity (Assumption~\ref{assump:comb-mat}) of FTC sequences means they only average effectively over the full sequence. Drift between agents' local models may occur during the sequence, deteriorating the performance and requiring a decreased step size to compensate. This is consistent with the condition that ${\mu\leq \mathcal{O}\left(\frac{1}{\tau}\right)}$. \textcolor{black}{Limiting the drift is the reason for the variant of Aug-DGM adopted, in which the auxiliary update of~\eqref{eqn:aug-dgm-node} applies $\mu$ to the gradients, $\nabla J_k(w)$. The gradient terms perturb the recursion, driving agents towards their locally optimal model and encouraging neighboring models to differ. Scaling the gradients by the step-size $\mu$ helps reduce the accumulation of errors by reducing this local perturbation.} 

Finally, we observe an interesting trade-off between $\tau$ and $\epstau$. Exact FTC sequences prove especially effective when the consensus number is relatively low, such as in the hypercube in Fig.~\ref{fig:tradeoff}(a) where $K=8, \tau=3$. There is a demonstrable benefit to using the exact FTC sequence over a classical weighting rule. For graphs with a larger consensus number, the performance degrades. In this case we may choose to set $\tau$ to be smaller than the actual consensus number. The approximate FTC sequence will have a larger approximation error $\epstau$, but a smaller value of $\tau$. Considering that the MSD depends on the ratio $\frac{\tau^2}{(1-\epstau)^2}$ this may be beneficial. We demonstrate this in the path graph in Fig.~\ref{fig:tradeoff}(b). The perfect FTC sequence on the path suffers from increasing drift over the course of the sequence. We therefore see the error grow over the sequence, sharply decreasing every $\tau=7$ iterations and demonstrating the two timescale behavior previously described. The best performance is achieved by the approximate FTC case \textcolor{black}{($\epstau=0.4$). Matrices are learned using the method in~\cite{eusipco} with $\tau=3$. The} approximate case outperforms both the perfect FTC case \textcolor{black}{and the static metropolis rule}. \textcolor{black}{ More generally we can conclude from our theoretical predictions and empirical results that approximate FTC sequences provide a benefit over a static weighting rule (with second largest eigenvalue $\lambda$) when $\frac{\tau^2}{(1-\epstau)^2}$ is small relative to $\frac{1}{(1-\lambda)^2}$}. 

\begin{figure}
\centering
\includegraphics[width=0.4\textwidth]{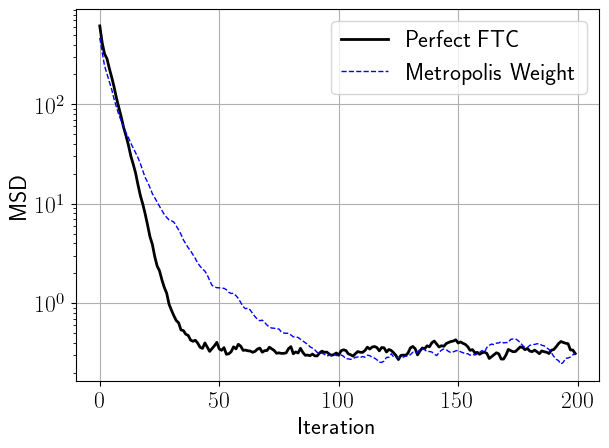}
\caption*{\centering \small(a) Hypercube}
\includegraphics[width=0.4\textwidth]{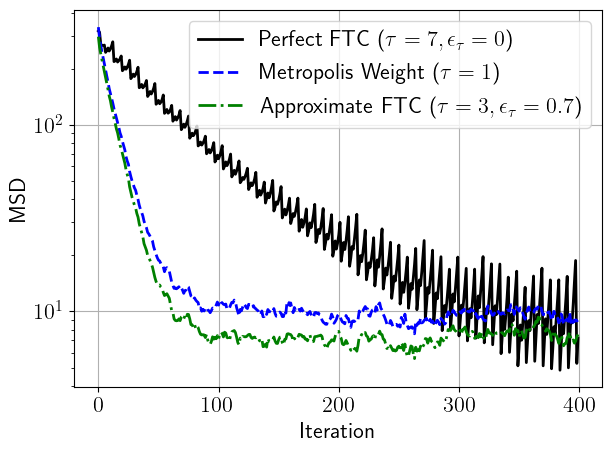}
\caption*{\centering \small(b) Path}
\caption{\textcolor{black}{Performance of (approximate) FTC sequences on different graphs.} }\label{fig:tradeoff}
\end{figure}

\section{Conclusion}
In this work we analyzed the convergence properties of Aug-DGM with approximate FTC sequences. Our results confirm that approximate FTC sequences can provide performance benefits to gradient-tracking algorithms, showing that the performance degrades gracefully as the approximation error increases. The theoretical bounds and subsequent simulations also highlight the non-trivial relationship between the approximation accuracy \( \epsilon_{\tau} \) and length of the sequence \( \tau \). While the accuracy generally increases as the length of the sequence increases, this comes at a cost of parameter drift, which can deteriorate performance.

\bibliographystyle{IEEEtran}
\bibliography{References}

{\appendices

\section{Transformation}\label{app:transformation}

\subsection{First Transformation}
    Beginning with~\eqref{eq:aug-dgm:w} and noting that ${\g_{i}=\y_i+\mu\A_i\gradappcal(\w_i)}$, the update for $\w_i$ becomes:
    \begin{align}
        \w_{i}&= \A_i\left(\w_{i-1}-\y_{i-1}-\mu\A_{i-1}\gradappcal(\w_{i-1})\right) \nonumber \\
        &= \A_{i}\w_{i-1}-\A_i\y_{i-1}-\mu\A_i\A_{i-1}\gradappcal(\w_{i-1})\label{eq:aug-dgm-y:w}
    \end{align}
    Applying the change of variable to both sides of~\eqref{eq:aug-dgm:g}:
    \begin{align}
        \y_i+\mu\A_i\gradappcal(\w_i)&=\A_{i}(\y_{i-1}+\mu\A_{i-1}\gradappcal(\w_{i-1}))\nonumber\\&\quad+\mu\A_i\gradappcal(\w_{i})-\mu\A_i\gradappcal(\w_{i-1}) \nonumber
        \end{align}
        and after rearranging:
        \begin{align}
        \y_{i}&=\A_{i}\y_{i-1}-\mu\A_i(I-\A_{i-1})\gradappcal(\w_{i-1}) \label{eq:aug-dgm-y:y}
    \end{align}

\subsection{Second Transformation}
From~\eqref{eq:aug-dgm-y:w} and using ${\z_i\triangleq\y_i+\mu \A_i\gradcal(\w_{c,i})}$:
    \begin{align}
        \w_{i}&= \A_{i}\w_{i-1}-\A_i(\z_{i-1}-\mu\A_{i-1}\gradcal(\w_{c,i-1}))\nonumber\\&\quad-\mu\A_i\A_{i-1}\gradappcal(\w_{i-1})\nonumber  \\
        &= \A_{i}\w_{i-1}-\A_i\z_{i-1}+\mu\A_i\A_{i-1}(\gradcal(\w_{c,i-1})\nonumber\\&\quad-\gradcal(\w_{i-1}))-\mu\A_i\A_{i-1}\s_{i}(\w_{i-1}) \nonumber 
    \end{align}
    where in the last line we used the definition of gradient noise $\s_i(\w_{i-1})\triangleq \gradappcal(\w_{i-1}) - \gradcal(\w_{i-1})$.

    From the definition of $\z_i$, we have ${\y_i=\z_i-\mu\A_i\gradcal(\w_{c,i})}$. Replacing $\y_i$ and $\y_{i-1}$ in~\eqref{eq:aug-dgm-y:y}:
    \begin{align}
        \y_{i}&=\A_{i}\y_{i-1}-\mu\A_i(I-\A_{i-1})\gradappcal(\w_{i-1}) \nonumber \\
        \z_{i}&-\mu\A_{i}\gradcal(\w_{c,i}) = \A_i( \z_{i-1}-\mu\A_{i-1}\gradcal(\w_{c,i-1})) \nonumber \\ &-\mu\A_i(I-\A_{i-1})\gradappcal(\w_{i-1}) \nonumber \\
        \z_i &= \A_i\z_{i-1}+\mu\A_{i}\gradcal(\w_{c,i}) -\mu\A_i\A_{i-1}\gradcal(\w_{c,i-1})) \nonumber\\&\quad-\mu\A_i(I-\A_{i-1})\gradappcal(\w_{i-1})\nonumber  \\
        &\stackrel{(a)}{=} \A_i\z_{i-1} +\mu\A_i(I-\A_{i-1})\gradcal(\w_{c,i-1})) \nonumber\\&\quad-\mu\A_i(I-\A_{i-1})\gradappcal(\w_{i-1})\nonumber\\&\quad+\mu\A_i(\gradcal(\w_{c,i})-\gradcal(\w_{c,i-1})) \nonumber \\
        &\stackrel{(b)}{=} \A_i\z_{i-1} -\mu\A_i(I-\A_{i-1})(\gradcal(\w_{i-1})+\s_i(\w_{i-1})-\nonumber\\&\quad\gradcal(\w_{c,i-1}))) +\mu\A_i(\gradcal(\w_{c,i})-\gradcal(\w_{c,i-1})) \nonumber \\
        &= \A_i\z_{i-1} -\mu\A_i(I-\A_{i-1})(\gradcal(\w_{i-1})-\gradcal(\w_{c,i-1}))) \nonumber\\&\quad+\mu\A_i(\gradcal(\w_{c,i})-\gradcal(\w_{c,i-1}))\nonumber\\&\quad-\mu\A_i(I-\A_{i-1})\s_i(\w_{i-1})
    \end{align}
    where in $(a)$ we added and subtracted $\mu\A_i\gradcal(\w_{c,i-1})$ and grouped like terms and in $(b)$ we used the definition of gradient noise $\s_i(\w_{i-1})\triangleq \gradappcal(\w_{i-1}) - \gradcal(\w_{i-1})$.

\section{Consensus Error Relation}\label{app:cons-err-relation}

\subsection{Proof of Lemma~\ref{lemma:v-h-bound}}
Considering first, $\vb_i$, and applying the vector norm to the block-structure definition in~\eqref{eqn:v-defn}:
    \begin{align}
    &\expectnormfi{\vb_i} \nonumber\\
    &= \expectnormfi{\Ahat_{i}\Ahat_{i-1}\s_{i}(\w_{i-1})}\nonumber\\&\quad+\expectnormfi{\Ahat_{i}(I-\A_{i-1})\s_{i}(\w_{i-1})} \nonumber\\
    &\stackrel{(a)}{\leq} \norm{\Ahat_{i}}^2\norm{\Ahat_{i-1}}^2\expectnormfi{\s_{i}(\w_{i-1})}\nonumber\\&\quad+\norm{\Ahat_{i}}^2\norm{I-\Ahat_{i-1}}^2\expectnormfi{\s_{i}(\w_{i-1})} \nonumber\\
    &\leq \expectnormfi{\s_{i}(\w_{i-1})}+2\expectnormfi{\s_{i}(\w_{i-1})} \nonumber\\
    &\stackrel{\eqref{eq:stacked-gradnoise-var}}{\leq} 3\Big(3\beta^2\zeta^2+3\beta^2K\norm{\centerr_{c,i-1}}^2+3\beta^2\norm{\what_{i-1}}^2+\sigma^2 \Big)
\end{align}
where in $(a)$ we used the submultiplicativity of the matrix norm \textcolor{black}{and the fact that the spectral norm of $\Ahat_i$ is 1, since ${\Ahat_i\triangleq A_i\otimes I_M}$ has the same spectral norm as $A_i$ (Assumption~\ref{assump:comb-mat})}.

Next, considering $\hb_i$ from~\eqref{eqn:h-defn}:
\begin{align}
    &\norm{\hb_i}^2 \nonumber\\&= \norm{\Ahat_i\Ahat_{i-1}(\gradcal(\w_{i-1})-\gradcal(\w_{c,i-1}))}^2\mathlinebreak\quad+\Big\lVert\A_{i} (I-A_{i-1})(\gradcal(\w_{i-1})-\gradcal(\w_{c,i-1}))\nonumber\\&\quad -\Ahat_{i}(\gradcal(\w_{c,i})-\gradcal(\w_{c,i-1}))\Big\rVert^2 \nonumber\\
    &\stackrel{(a)}{\leq} \norm{\Ahat_i}^2\norm{\Ahat_{i-1}}^2\norm{\gradcal(\w_{i-1})-\gradcal(\w_{c,i-1})}^2\nonumber\\&\quad+\big\lVert \A_{i} (I-A_{i-1})(\gradcal(\w_{i-1})-\gradcal(\w_{c,i-1}))\mathlinebreak\quad-\Ahat_{i}(\gradcal(\w_{c,i})-\gradcal(\w_{c,i-1}))\big\rVert^2 \nonumber\\
    &\leq\Big\lVert\gradcal(\w_{i-1})-\gradcal(\w_{c,i-1})\Big\rVert^2\mathlinebreak\quad+\Big\lVert\A_{i} (I-A_{i-1})(\gradcal(\w_{i-1})-\gradcal(\w_{c,i-1}))\mathlinebreak\quad-\Ahat_{i}(\gradcal(\w_{c,i})-\gradcal(\w_{c,i-1}))\Big\rVert^2 \nonumber\\
    &\stackrel{\eqref{eq:assump-lipsch}}{\leq}\delta^2\norm{\w_{i-1}-\w_{c,i-1}}^2 + 2\Big\lVert\A_i(I-\A_{i-1})(\gradcal(\w_{i-1})\mathlinebreak\quad-\gradcal(\w_{c,i-1}))\Big\rVert^2+2\norm{\Ahat_i(\gradcal(\w_{c,i})-\gradcal(\w_{c,i-1}))}^2 \nonumber\\
    &\stackrel{(b)}{\leq}\delta^2\norm{\w_{i-1}-\w_{c,i-1}}^2\mathlinebreak\quad+ 2\norm{\A_i}^2\Big\lVert I-\A_{i-1}\big\rVert^2\norm{\gradcal(\w_{i-1})-\gradcal(\w_{c,i-1})}^2 \mathlinebreak\quad +2\norm{\Ahat_i}^2\norm{\gradcal(\w_{c,i})-\gradcal(\w_{c,i-1})}^2 \nonumber\\
    &\leq\delta^2\norm{\w_{i-1}-\w_{c,i-1}}^2 \mathlinebreak\quad+ 4\norm{\gradcal(\w_{i-1})-\gradcal(\w_{c,i-1})}^2\mathlinebreak\quad+2\norm{\gradcal(\w_{c,i})-\gradcal(\w_{c,i-1})}^2 \nonumber\\
    &\stackrel{\eqref{eq:assump-lipsch}}{\leq} \delta^2\norm{\w_{i-1}-\w_{c,i-1}}^2 + 4\delta^2\norm{\w_{i-1}-\w_{c,i-1}}^2\mathlinebreak\quad+2\delta^2\norm{\w_{c,i}-\w_{c,i-1}}^2 \nonumber\\
    &= 5\delta^2\norm{\what_{i-1}}^2+2\delta^2K\norm{\wb_{c,i}-\wb_{c,i-1}}^2
    \end{align}
    where $(a)$ and $(b)$ used the submultiplicative property of the matrix norm. From the centroid recursion in~\eqref{eqn:centroid-evolution}:
    \begin{align}
        \wb_{c,i}-\wb_{c,i-1}=-\mu\onekronim(\gradcal(\w_{i-1})+\s_{i}(\w_{i-1}))
    \end{align}
    and thus,
    \begin{align}
        &\expectnormfi{\hb_{i}} \nonumber\\&\leq 5\delta^2\expectnormfi{\what_{i-1}}\mathlinebreak\quad+2\delta^2K\mathbb{E}\Bigg[\Bigg\lVert\mu\onekronim(\gradcal(\w_{i-1})\mathlinebreak\qquad\qquad\qquad+\s_{i}(\w_{i-1}))\Bigg\lVert^2\Big\vert \w_{i-1}\Bigg] \nonumber\\
        &= 5\delta^2\norm{\what_{i-1}} \mathlinebreak\quad+ 2\delta^2K\mu^2\expectnormfi{\onekronim(\gradcal(\w_{i-1})}\mathlinebreak\quad+ 2\delta^2K\mu^2\expectnormfi{ \onekronim\s_{i}(\w_{i-1}))}\mathlinebreak\quad+ 2\delta^2K\mu^2\expect{\s_{i}(\w_{i-1})\transp\onekronim\gradcal(\w_{i-1}) \big|\w_{i-1}} \nonumber\\
        &\stackrel{\eqref{eq:assump-gradnoise-unbias}}{=} 5\delta^2\norm{\what_{i-1}} + 2\delta^2K\mu^2\norm{\onekronim(\gradcal(\w_{i-1})}^2\mathlinebreak\quad+ 2\delta^2K\mu^2\expectnormfi{ \onekronim\s_{i}(\w_{i-1}))} \nonumber\\
        &\stackrel{\eqref{eq:avg-stacked-gradnoise-var}}{\leq} 5\delta^2\norm{\what_{i-1}} + 2\delta^2K\mu^2\norm{\onekronim(\gradcal(\w_{i-1})}^2\mathlinebreak\quad + 2\delta^2\mu^2(3\beta^2\zeta^2+3\beta^2K\norm{\centerr_{c,i-1}}^2+3\beta^2\norm{\what_{i-1}}^2+\sigma^2) \nonumber\\
        &\stackrel{(a)}{=} 5\delta^2\norm{\what_{i-1}} \mathlinebreak\quad+ 2\delta^2K\mu^2\Big\lVert\onekronim((\gradcal(\w_{i-1})-\gradcal(\w_{c,i-1}))\mathlinebreak\qquad\qquad\qquad+ \onekronim\gradcal(\w_{c,i-1})\Big\rVert^2 \mathlinebreak\quad+ 2\delta^2\mu^2(3\beta^2\zeta^2+3\beta^2K\norm{\centerr_{c,i-1}}^2+3\beta^2\norm{\what_{i-1}}^2+\sigma^2) \nonumber\\
        &\stackrel{(b)}{\leq} 5\delta^2\norm{\what_{i-1}} \mathlinebreak\quad+ 4\delta^2K\mu^2\norm{\onekronim((\gradcal(\w_{i-1})-\gradcal(\w_{c,i-1}))}^2 \mathlinebreak\quad+4\delta^2K\mu^2\norm{\onekronim\gradcal(\w_{c,i-1})} ^2\mathlinebreak\quad+2\delta^2\mu^2(3\beta^2\zeta^2+3\beta^2K\norm{\centerr_{c,i-1}}^2+3\beta^2\norm{\what_{i-1}}^2+\sigma^2) \nonumber\\
        &\stackrel{(c)}{\leq} 5\delta^2\norm{\what_{i-1}} + 4\delta^4\mu^2\norm{\what_{i-1}}^2\mathlinebreak\quad+4\delta^2K\mu^2\norm{\onekronim\gradcal(\w_{c,i-1})}^2 \mathlinebreak\quad +2\delta^2\mu^2(3\beta^2\zeta^2+3\beta^2K\norm{\centerr_{c,i-1}}^2+3\beta^2\norm{\what_{i-1}}^2+\sigma^2) \nonumber\\
        &\stackrel{(d)}{\leq} 5\delta^2\norm{\what_{i-1}} + 4\delta^4\mu^2\norm{\what_{i-1}}^2\mathlinebreak\quad+8\delta^4\mu^2\norm{\centerr_{c,i-1}}^2+8\delta^4\mu^2\zeta^2 \mathlinebreak\quad +2\delta^2\mu^2(3\beta^2\zeta^2+3\beta^2K\norm{\centerr_{c,i-1}}^2+3\beta^2\norm{\what_{i-1}}^2+\sigma^2) \nonumber\\
        &\leq (5\delta^2+4\delta^4\mu^2+6\beta^2\delta^2\mu^2)\norm{\what_{i-1}} \mathlinebreak\quad+ 2\delta^2\mu^2(4\delta^2+4\beta^2K)\norm{\centerr_{c,i-1}}^2+6\beta^2\delta^2\mu^2\zeta^2+2\delta^2\mu^2\sigma^2
    \end{align}
    where in $(a)$ we added and subtracted ${\onekronim\gradcal(\w_{c,i-1})}$, $(b)$ used Jensen's inequality to split the terms in the expectation, $(c)$ follows from:
    \begin{align}
    &\norm{\onekronim(\gradcal(\w_i)-\gradcal(\w_{c,i}))}^2 \nonumber\\&= \norm{\frac{1}{K}\sum_{k=1}^K (J_k(\wb_{k,i})-J_k(\wb_{c,i})) }^2 \nonumber\\
        &\leq \frac{1}{K}\sum_{k=1}^K\norm{J_k(\wb_{k,i})-J_k(\wb_{c,i}) }^2 \nonumber\\
        &\leq\frac{\delta^2}{K}\sum_{k=1}^K\norm{\wb_{k,i}-\wb_{c,i} }^2 \nonumber\\
        &= \frac{\delta^2}{K}\norm{\what_i}^2 
    \end{align}
    
    and $(d)$ from,
    \begin{align}
    &\norm{\onekronim\gradcal(\w_{c,i-1})}^2  \nonumber\\
        &= \norm{\frac{1}{K}\sum_{k=1}^KJ_k(\wb_{c,i-1})}^2 \nonumber\\
        &\leq\frac{1}{K} \sum_{k=1}^K\norm{J_k(\wb_{c,i-1})}^2 \nonumber\\
        & \leq\frac{1}{K}\sum_{k=1}^K\norm{J_k(\wb_{c,i-1})-J_k(w^o)+J_k(w^o)}^2 \nonumber\\
        &\leq \frac{2}{K}\sum_{k=1}^K (\norm{J_k(\wb_{c,i-1})-J_k(w^o)}^2+\norm{J_k(w^o)}^2) \nonumber\\
        &= \frac{2}{K}\sum_{k=1}^K (\norm{J_k(\wb_{c,i-1})-J_k(w^o)}^2+\norm{J_k(w^o)-J_k(w_k^o)}^2) \nonumber\\
        &\leq \frac{2}{K}\sum_{k=1}^K (\norm{J_k(\wb_{c,i-1})-J_k(w^o)}^2+\delta_k^2\norm{w^o-w_k^o}^2) \nonumber\\
        &\leq \frac{2\delta^2}{K}\norm{\centerr_{c,i-1}}+\frac{2\delta^2}{K}\sum_{k=1}^K\zeta_k^2 \nonumber\\
        &\leq \frac{2\delta^2}{K}\norm{\centerr_{c,i-1}}+\frac{2\delta^2}{K}\zeta^2
    \end{align}
    The final bound is obtained by setting $\mu^2\leq\frac{1}{4\delta^2}$:
    \begin{align}
        &\expectnormfi{\hb_i}\leq 3\left(2\delta^2+\frac{1}{2}\beta^2\right)\norm{\what_{i-1}} \mathlinebreak\quad\quad+ 2(\delta^2+\beta^2K)\norm{\centerr_{c,i-1}}^2+\frac{3}{2}\beta^2\zeta^2+\frac{1}{2}\sigma^2
    \end{align}
    
\subsection{Proof of Lemma~\eqref{lemma:consensus-bound}}

The consensus error is bounded by separating out $\vb_i$ on the assumption that the gradient noise is unbiased and pairwise uncorrelated (Assumption~\eqref{assump:gradnoise}) and then using a vector norm bound to separate $\hb_i$ as in~\cite{koloskova}:
\begin{align}
    &\expectnormfi{\xhat_i} \nonumber\\&= \mathbb{E}\Bigg[\Big\lVert G_{i:m\tau+1}\xhat_{m\tau}-\mu\sum_{\mathclap{j=m\tau+1}}^{i-1}G_{i:j+1}\hb_j\mathlinebreak\quad-\mu\sum_{\mathclap{j=m\tau+1}}^{i-1}G_{i:j+1}\vb_j-\mu \hb_i-\mu \vb_i \Big\rVert^2 \Big\vert \w_{i-1}\Bigg] \nonumber\\
    &\stackrel{(a)}{=} \mathbb{E}\Bigg[\Big\lVert G_{i:m\tau+1}\xhat_{m\tau}-\mu\sum_{\mathclap{j=m\tau+1}}^{i-1}G_{i:j+1}\hb_j\mathlinebreak\quad-\mu\sum_{\mathclap{j=m\tau+1}}^{i-1}G_{i:j+1}\vb_j-\mu \hb_i\Big\rVert^2\Big\vert \w_{i-1}\Bigg] +\expectnormfi{\mu \vb_i }\nonumber \\
    &\stackrel{(b)}{\leq} (1+\alpha_1)\mathbb{E}\Bigg[\Big\lVert G_{i:m\tau+1}\xhat_{m\tau}-\mu\sum_{\mathclap{j=m\tau+1}}^{i-1}G_{i:j+1}\hb_j\mathlinebreak\quad-\mu\sum_{\mathclap{j=m\tau+1}}^{i-1}G_{i:j+1}\vb_j\Big\rVert^2\Big\Vert \w_{i-1}\Bigg]\mathlinebreak\quad +(1+\alpha_1^{-1})\expectnormfi{\mu \hb_i}+\expectnormfi{\mu \vb_i } 
\end{align}
where in $(a)$ the cross-terms were removed based on Assumption~\eqref{assump:gradnoise}, and $(b)$ follows from Young's inequality for vectors. Setting $\alpha_1=\frac{1}{\gamma-1}$ for some constant $\gamma>1$ that will be chosen later.
\begin{align}
    &\expectnormfi{\xhat_i} \nonumber\\&\leq \frac{\gamma}{\gamma-1}\mathbb{E}\Bigg[\Big\lVert G_{i:m\tau+1}\xhat_{m\tau}\mathlinebreak\qquad\qquad-\mu\sum_{\mathclap{j=m\tau+1}}^{i-1}G_{i:j+1}\hb_j-\mu\sum_{\mathclap{j=m\tau+1}}^{i-1}G_{i:j+1}\vb_j\Big\rVert^2\Big\vert\w_{i-1}\Bigg] \mathlinebreak\quad+\gamma\expectnormfi{\mu \hb_i}+\expectnormfi{\mu \vb_i } 
\end{align}
We can repeat the process, separating out $\vb_{i-1}$ and $\hb_{i-1}$:
\begin{align}
    &\expectnormfi{\xhat_i} \nonumber\\&\leq \frac{\gamma}{\gamma-1}\mathbb{E}\Bigg[\Big\lVert G_{i:m\tau+1}\xhat_{m\tau}-\mu\sum_{\mathclap{j=m\tau+1}}^{i-2}G_{i:j+1}\hb_j\mathlinebreak\quad-\mu\sum_{\mathclap{j=m\tau+1}}^{i-2}G_{i:j+1}\vb_j-\mu G_i\hb_{i-1}-\mu G_i \vb_{i-1}\Big\rVert^2\Big\vert\w_{i-1}\Bigg] \mathlinebreak\quad+\gamma\expectnormfi{\mu \hb_i}+\expectnormfi{\mu \vb_i } \nonumber\\
    &= \frac{\gamma}{\gamma-1}\mathbb{E}\Bigg[\Big\lVert G_{i:m\tau+1}\xhat_{m\tau}-\mu\sum_{\mathclap{j=m\tau+1}}^{i-2}G_{i:j+1}\hb_j\mathlinebreak\quad-\mu\sum_{\mathclap{j=m\tau+1}}^{i-2}G_{i:j+1}\vb_j-\mu G_i\hb_{i-1}\Big\rVert^2\Big\vert\w_{i-1}\Bigg]\mathlinebreak\quad +\gamma\expectnormfi{\mu \hb_i}+\frac{\gamma}{\gamma-1}\expectnormfi{\mu G_i \vb_{i-1}}\mathlinebreak\quad+\expectnormfi{\mu \vb_i } \nonumber\\
    &\leq \frac{\gamma(1+\alpha_2)}{\gamma-1}\mathbb{E}\Bigg[\Big\lVert G_{i:m\tau+1}\xhat_{m\tau}-\mu\sum_{\mathclap{j=m\tau+1}}^{i-2}G_{i:j+1}\hb_j\mathlinebreak\quad-\mu\sum_{\mathclap{j=m\tau+1}}^{i-2}G_{i:j+1}\vb_j\Big\rVert^2\Big\vert\w_{i-1}\Bigg] +\expectnormfi{\mu \vb_i }\mathlinebreak\quad+\frac{\gamma(1+\alpha_2^{-1})}{\gamma-1}\expectnormfi{\mu G_i\hb_{i-1}}\mathlinebreak\quad+\gamma\expectnormfi{\mu  \hb_i}+\frac{\gamma}{\gamma-1}\expectnormfi{\mu G_i \vb_{i-1}} \nonumber\\
    &\leq \frac{\gamma}{\gamma-2}\mathbb{E}\Bigg[\Big\lVert G_{i:m\tau+1}\xhat_{m\tau}-\mu\sum_{\mathclap{j=m\tau+1}}^{i-2}G_{i:j+1}\hb_j\mathlinebreak\quad-\mu\sum_{\mathclap{j=m\tau+1}}^{i-2}G_{i:j+1}\vb_j\Big\rVert^2\Big\vert\w_{i-1}\Bigg] +\gamma\expectnormfi{\mu G_i \hb_{i-1}}\mathlinebreak\quad+\gamma\expectnormfi{\mu \hb_i}+\frac{\gamma}{\gamma-1}\expectnormfi{\mu G_i \vb_{i-1}}\mathlinebreak\quad+\expectnormfi{\mu \vb_i }
\end{align}

where the last line follows by setting $\alpha_2=\frac{1}{\gamma-2}$. Splitting the remainder of the terms by continuing in this fashion with \( \alpha_j = \frac{1}{\gamma - j} \):
\begin{align}
    &\expectnormfi{\xhat_i} \leq \frac{\gamma}{\gamma-(i-m\tau)}\expectnormfi{G_{i:m\tau+1}\xhat_{m\tau}}\mathlinebreak\quad+\gamma\sum_{\mathclap{j=m\tau+1}}^{i-1}\expectnormfi{\mu G_{i:j+1}\hb_j}+\gamma\expectnormfi{\mu \hb_i}\mathlinebreak\quad+\sum_{\mathclap{j=m\tau+1}~~~}^{i-1}\frac{\gamma}{\gamma-(i-j)}\expectnormfi{\mu G_{i:j+1}\vb_j}\mathlinebreak\quad+\expectnormfi{\mu \vb_i} \nonumber\\
    &\stackrel{(a)}{\leq}\frac{\gamma}{\gamma-(i-m\tau)}\norm{G_{i:m\tau+1}}^2\expectnormfi{\xhat_{m\tau}}\mathlinebreak\quad+\gamma\mu^2\sum_{\mathclap{j=m\tau+1}}^{i-1}\norm{G_{i:j+1}}^2\expectnormfi{\hb_j}\mathlinebreak\quad+\gamma\mu^2\expectnormfi{ \hb_i}+\mu^2\expectnormfi{ \vb_i}\mathlinebreak\quad+\mu^2\sum_{\mathclap{j=m\tau+1}~~~}^{i-1}\frac{\gamma\norm{G_{i:j+1}}^2}{\gamma-(i-j)}\expectnormfi{\vb_j} \nonumber\\
    &\stackrel{(b)}{\leq}\frac{\gamma\epstau^2}{\gamma-(i-m\tau)}\expectnormfi{\xhat_{m\tau}}\mathlinebreak\quad+\gamma\mu^2\sum_{\mathclap{j=m\tau+1}}^{i-1}\expectnormfi{\hb_j}+\gamma\mu^2\expectnormfi{ \hb_i}\mathlinebreak\quad+\mu^2\sum_{\mathclap{j=m\tau+1}~~}^{i-1}\frac{\gamma}{\gamma-(i-j)}\expectnormfi{\vb_j}+\mu^2\expectnormfi{ \vb_i} \nonumber\\
    &= \frac{\gamma\epstau^2}{\gamma-(i-m\tau)}\norm{\xhat_{m\tau}}^2+\gamma\mu^2\sum_{\mathclap{j=m\tau+1}}^{i}\expectnormfi{\hb_j}\mathlinebreak\quad+\mu^2\sum_{\mathclap{j=m\tau+1}}^{i}\frac{\gamma}{\gamma-(i-j)}\expectnormfi{\vb_j} \label{eq:cons-err-proof-partway}
\end{align}
where in $(a)$ we used the submultiplicativity of the matrix norm, in $(b)$ we used the fact ${\norm{G_{i:m\tau+1}}^2\leq\epstau^2}$ for ${i\geq(m+1)\tau}$ which follows by considering the block upper-triangular structure of $G_{i:m\tau+1}$:
\begin{align}
    G_{i:m\tau+1} = \begin{bmatrix}
        \A_{i:m\tau+1} & (i-m\tau)\A_{i:m\tau+1}\\
        0 & \A_{i:m\tau+1}
    \end{bmatrix}
\end{align}
and then using that the spectral norm of a block upper-triangular matrix equals the maximum spectral norm of its diagonal blocks \cite{simovici}, in this case ${\norm{A_{i:m\tau+1}}=\epstau}$.\\

Setting $\gamma=2\tau\frac{1+\epstau}{1-\epstau}$ and considering the coefficient for $\expectnormfi{\vb_j}$ in~\eqref{eq:cons-err-proof-partway} we have:
\begin{align}
    \frac{\gamma}{\gamma-(i-m\tau)}&\leq\frac{\gamma}{\gamma-(2\tau-1)}
    \nonumber\\&
    =\frac{2\tau(1+\epstau)}{2\tau(1+\epstau)-(2\tau-1)(1-\epstau)}\nonumber\\&=\frac{2\tau(1+\epstau)}{1+4\tau\epstau-\epstau}\leq 2\tau 
    \end{align}
    for ${i-m\tau=i-(\left\lfloor\frac{i}{\tau}\right\rfloor-1)\tau\leq 2\tau-1<2\tau}$ and $0\leq\epstau<1$. Similarly for the coefficient of $\norm{\xhat_{m\tau}}^2$:
    \begin{align}
    \frac{\gamma\epstau^2}{\gamma-(i-m\tau)}\leq\frac{\gamma\epstau^2}{\gamma-2\tau}
    &\leq \frac{2\tau(1+\epstau)\epstau^2}{2\tau(1+\epstau)-2\tau(1-\epstau)}\nonumber\\
    &=\frac{\epstau}{2}(1+\epstau)
\end{align}

Returning to~\eqref{eq:cons-err-proof-partway}:
\begin{align}
    &\expectnormfi{\xhat_i} \leq \frac{\epstau}{2}(1+\epstau) \norm{\xhat_{m\tau}}^2\mathlinebreak\quad\quad+\frac{2\tau(1+\epstau)\mu^2}{1-\epstau}\sum_{\mathclap{j=m\tau+1}}^{i}\expectnormfi{\hb_j}\mathlinebreak\quad\quad+2\tau\mu^2\sum_{\mathclap{j=m\tau+1}}^{i}\expectnormfi{\vb_j} \nonumber\\
    &\stackrel{\eqref{eq:vi-var}}{\leq} \frac{\epstau}{2}(1+\epstau) \norm{\xhat_{m\tau}}^2+\frac{2\tau(1+\epstau)\mu^2}{1-\epstau}\sum_{\mathclap{j=m\tau+1}}^{i}\expectnormfi{\hb_j}\mathlinebreak\quad +2\tau\mu^2\sum_{\mathclap{j=m\tau+1}}^{i}\big(9\beta^2\zeta^2+9\beta^2K\norm{\centerr_{c,j-1}}^2 \mathlinebreak\qquad+9\beta^2\norm{\what_{j-1}}^2+3\sigma^2 \big) \nonumber\\
    &\stackrel{\eqref{eq:h-bound}}{\leq} \frac{\epstau}{2}(1+\epstau) \norm{\xhat_{m\tau}}^2 \mathlinebreak\quad+2\tau\mu^2\sum_{\mathclap{j=m\tau+1}}^{i}\Big(9\beta^2\zeta^2+9\beta^2K\norm{\centerr_{c,j-1}}^2 +9\beta^2\norm{\what_{j-1}}^2+3\sigma^2 \Big)\mathlinebreak\quad+\frac{2\tau(1+\epstau)\mu^2}{1-\epstau}\sum_{\mathclap{j=m\tau+1}~~}^{i}\Bigg(3\left(2\delta^2+\frac{1}{2}\beta^2\right)\norm{\what_{j-1}} \mathlinebreak\quad\quad + 2(\delta^2+\beta^2K)\norm{\centerr_{c,j-1}}^2+\frac{3}{2}\beta^2\zeta^2+\frac{1}{2}\sigma^2 \Bigg) \nonumber\\
    &\stackrel{(a)}{=} \frac{\epstau}{2}(1+\epstau) \norm{\xhat_{m\tau}}^2 \mathlinebreak\quad+2\tau\mu^2\sum_{\mathclap{j=m\tau}}^{i-1}\Big(9\beta^2\zeta^2+9\beta^2K\norm{\centerr_{c,j}}^2 +9\beta^2\norm{\xhat_{j}}^2+3\sigma^2 \Big)\mathlinebreak\quad+\frac{2\tau(1+\epstau)\mu^2}{1-\epstau}\sum_{\mathclap{j=m\tau}}^{i-1}\Bigg(3\left(2\delta^2+\frac{1}{2}\beta^2\right)\norm{\xhat_{j}} + 2(\delta^2\mathlinebreak\quad+\beta^2K)\norm{\centerr_{c,j}}^2+\frac{3}{2}\beta^2\zeta^2+\frac{1}{2}\sigma^2 \Bigg) \nonumber\\
    &= \frac{\epstau}{2}(1+\epstau)\norm{\xhat_{m\tau}}^2\mathlinebreak\quad +\left(\frac{3\tau(4\delta^2+\beta^2)(1+\epstau)\mu^2}{1-\epstau}+18\tau\beta^2\mu^2 \right)\sum_{\mathclap{j=m\tau}}^{i-1}\norm{\xhat_j}^2 \mathlinebreak\quad
    +\left(\frac{4\tau(\delta^2+\beta^2K)(1+\epstau)\mu^2}{1-\epstau} +18\tau\beta^2K\mu^2 \right)\sum_{\mathclap{j=m\tau}}^{i-1}\norm{\centerr_{c,j}}^2\mathlinebreak\quad+\left(18\tau\beta^2\mu^2+\frac{3\tau\beta^2(1+\epstau)\mu^2}{1-\epstau}\right)\sum_{\mathclap{j=m\tau}}^{i-1}\zeta^2\mathlinebreak\quad + \left(6\tau\mu^2+\frac{(1+\epstau)\mu^2}{1-\epstau}\right)\sum_{\mathclap{j=m\tau}}^{i-1}\sigma^2 \nonumber\\
    &\stackrel{(b)}{=} \frac{\epstau}{2}(1+\epstau)\norm{\xhat_{m\tau}}^2 \mathlinebreak\quad+\left(\frac{3\tau(4\delta^2+\beta^2)(1+\epstau)\mu^2}{1-\epstau}+18\tau\beta^2\mu^2 \right)\sum_{\mathclap{j=m\tau}}^{i-1}\norm{\xhat_j}^2 \mathlinebreak\quad
    +\left(\frac{4\tau(\delta^2+\beta^2K)(1+\epstau)\mu^2}{1-\epstau} +18\tau\beta^2K\mu^2 \right)\sum_{\mathclap{j=m\tau}}^{i-1}\norm{\centerr_{c,j}}^2\mathlinebreak\quad+2\tau\left(18\tau\beta^2\mu^2+\frac{3\tau\beta^2(1+\epstau)\mu^2}{1-\epstau}\right)\zeta^2 \mathlinebreak\quad+ 2\tau\left(6\tau\mu^2+\frac{(1+\epstau)\mu^2}{1-\epstau}\right)\sigma^2  
\end{align}
where in $(a)$ the summation indices were changed and we used the fact that $\norm{\what_i}\leq\norm{\xhat_i}^2$ based on the definition of the vector norm $\norm{\xhat_i}^2=\norm{\what_i}^2+\norm{\zhat_i}^2$. In $(b)$ we used the fact that ${\sum_{j=m\tau}^{i-1}1<2\tau}$. Taking the total expectation yields the statement of the lemma.

\vspace{-10pt}\section{Centroid Error Relation}\label{app:cent-err}
 \textcolor{black}{We briefly prove Lemma~\eqref{lemma:centroid_bound}, noting that the proof follows arguments in ~\cite{chen2015, vlaski21,sayed2014}. We begin by showing that $\onekronim \y_i=0$ from the definition of $y_i$ in~\eqref{eq:aug-dgm-y:y}:
 \begin{align}
     &\onekronim \y_i \nonumber\\&=\onekronim (\A_{i}\y_{i-1}-\mu\A_i(I-\A_{i-1})\gradappcal(\w_{i-1}) ) \nonumber\\
     &\stackrel{(a)}{=}\onekronim\y_{i-1}
     \nonumber \\
     &= \onekronim \y_0 \nonumber\\
     &\stackrel{(b)}{=} \onekronim(\g_0-\mu\A_0\gradappcal(\w_0)) \nonumber \\
     &\stackrel{(c)}{=} \onekronim \mu\gradappcal(\w_0)- \onekronim \mu\gradappcal(\w_0) \nonumber\\
     &= 0
 \end{align}
 where $(a)$ follows from ${\onekronim \Ahat_i=\onekronim}$ and ${\onekronim(I-\Ahat_i)=0}$, $(b)$ from the relation between $\g_i$ and $\y_i$ and $(c)$ from the initialization of $\g_i$.\\
We now consider the behavior of the centroid. Subtracting $w^o $ from both sides of~\eqref{eqn:centroid-evolution}:
\begin{align}
    &\centerr_{c,i} =\centerr_{c,i-1}-\mu\onekronim(\gradcal(\w_{i-1})+\s_{i}(\w_{i-1})) \nonumber\\
    &\expectnormfi{\centerr_{c,i}} \nonumber\\&=\mathbb{E}\Big[\Big\lVert\centerr_{c,i-1}-\mu\onekronim(\gradcal(\w_{i-1})\mathlinebreak\quad+\s_{i}(\w_{i-1}))\Big\rVert^2\Big\vert \w_{i-1}\Big] \nonumber\\
    &\stackrel{(a)}{=}\mathbb{E}\Big[\Big\lVert\centerr_{c,i-1}-\mu\onekronim\big(\gradcal(\w_{c,i-1})+\gradcal(\w_{i-1})\mathlinebreak~-\gradcal(\w_{c,i-1})+\s_{i}(\w_{i-1})\big)\Big\rVert^2\Big\vert \w_{i-1}\Big] \nonumber\\
    &\stackrel{~\eqref{eq:assump-gradnoise-unbias}}{=} \mathbb{E}\Big[\Big\lVert\centerr_{c,i-1}-\mu\onekronim\big(\gradcal(\w_{c,i-1})+\gradcal(\w_{i-1})\mathlinebreak~-\gradcal(\w_{c,i-1})\big)\Big\rVert^2\Big\vert \w_{i-1}\Big]\mathlinebreak~+\mu^2\mathbb{E}\Big[\Big\lVert\onekronim\s_{i}(\w_{i-1})\Big\rVert^2\Big\vert \w_{i-1}\Big] \nonumber\\
    &\stackrel{(b)}{=}  \mathbb{E}\Big[\Big\lVert\centerr_{c,i-1}+\mu\frac{1}{K}\sum_{k=1}^K\big(\nabla J_k(\wb_{c,i-1})-\nabla J_k(\boldsymbol{w}_{k,i-1})\mathlinebreak~-\nabla J_k(\wb_{c,i-1})\big)\Big\rVert^2\Big\vert \w_{i-1}\Big]\mathlinebreak~+  \mu^2\left( \frac{3\beta^2\zeta^2}{K}+3\beta^2\norm{\centerr_{c,i-1}}^2+\frac{3\beta^2}{K}\norm{\what_{i-1}}^2+\frac{\sigma^2}{K}\right) \nonumber\\
    &\stackrel{(c)}{=}  \frac{1}{\alpha}\norm{\centerr_{c,i-1}+\mu\nabla J(\wb_{c,i-1})}^2\mathlinebreak+\frac{\mu^2}{1-\alpha}\norm{\frac{1}{K}\sum_{k=1}^K\left(\nabla J_k(\boldsymbol{w}_{k,i-1})-\nabla J_k(\wb_{c,i-1})\right)}^2\mathlinebreak+   \mu^2\left(\frac{3\beta^2\zeta^2}{K}+3\beta^2\norm{\centerr_{c,i-1}}^2+\frac{3\beta^2}{K}\norm{\what_{i-1}}^2+\frac{\sigma^2}{K}\right)\nonumber\\
    &\leq \frac{1}{\alpha}\norm{\centerr_{c,i-1}+\mu\nabla J(\wb_{c,i-1})}^2+\frac{\mu^2\delta^2}{K(1-\alpha)}\norm{\what_{i-1}}^2\mathlinebreak+   \mu^2\left(\frac{3\beta^2\zeta^2}{K}+3\beta^2\norm{\centerr_{c,i-1}}^2+\frac{3\beta^2}{K}\norm{\what_{i-1}}^2+\frac{\sigma^2}{K}\right)\label{eqn:cent-err-proof-part}
\end{align}
where in $(a)$ we added and subtracted ${-\mu\onekronim\gradcal(\w_{c,i-1})}$, in $(b)$ we used~\eqref{eq:avg-stacked-gradnoise-var} and in $(c)$ we applied Jensen's inequality. The first term in~\eqref{eqn:cent-err-proof-part} is equal to $(1-2\nu\mu+\delta^2\mu^2)\norm{\centerr_{c,i-1}}^2$ (see ~\cite{chen2015, vlaski21,sayed2014}). Setting $\alpha=\sqrt{1-2\nu\mu+\delta^2\mu^2}$: 
\begin{align}
    &\expectnormfi{\centerr_{c,i}} \nonumber \\&\leq \sqrt{1-2\nu\mu+\delta^2\mu^2}\norm{\centerr_{c,i-1}}^2+\frac{\delta^2\mu^2}{K(\nu\mu-\frac{1}{2}\delta^2\mu^2)}\norm{\what_{i-1}}^2 \mathlinebreak+ \mu^2\left(\frac{3\beta^2\zeta^2}{K}+3\beta^2\norm{\centerr_{c,i-1}}^2+\frac{3\beta^2}{K}\norm{\what_{i-1}}^2+\frac{\sigma^2}{K}\right) \nonumber\\
    &\leq \left(\sqrt{1-2\nu\mu+\delta^2\mu^2}+\mathcal{O}(\mu^2)\right)\norm{\centerr_{c,i-1}}^2 \mathlinebreak+ \left(\frac{2\delta^2\mu}{K(2\nu-\delta^2\mu)}+\mathcal{O}(\mu^2) \right)\norm{\xhat_{i-1}}^2 + \frac{3\beta^2\mu^2}{K}\zeta^2+\frac{\mu^2}{K}\sigma^2 \label{eqn:cent-err-proof-part2}
\end{align}
where in the last line we used the fact that ${\norm{\what_{i-1}}\leq\norm{\xhat_{i-1}}}$ based on the definition of the vector norm.
The first term in the expression is bound by:
\begin{align}
    \sqrt{1-2\nu\mu+\delta^2\mu^2}\leq \sqrt{1-\nu\mu}
    &\leq 1-\frac{\nu\mu}{2}
\end{align}
where the first inequality holds under the condition that ${\mu\leq\frac{\nu}{\delta^2}}$.
Taking the total expectation of~\eqref{eqn:cent-err-proof-part2} returns the bound in the lemma}.

\section{Main Result}\label{app:main-res}

\subsection{Derivation of Maximum Centroid Error in~\eqref{eqn:max-cent-err}}
Consider now the centroid error from Lemma~\eqref{lemma:centroid_bound}. We will iterate over the centroid error for $\tau$ iterations and then consider the maximum error:

\begin{align}
    &\expectnorm{\centerr_{c,i}}\leq\alpha_1\expectnorm{\centerr_{c,i-1}} + \alpha_2\expectnorm{\xhat_{i-1}}+\alpha_3 \nonumber\\
    ~&\leq \alpha_1^2\expectnorm{\centerr_{c,i-2}} + \alpha_2\alpha_1\expectnorm{\xhat_{i-2}}+\alpha_2\expectnorm{\xhat_{i-1}}\mathlinebreak~~+\alpha_3\left(1+\alpha_1\right) \nonumber\\
    ~&\leq \alpha_1^{\tau}\expectnorm{\centerr_{c,i-\tau}}+\alpha_2\sum_{j=i-\tau}^{i-1}\left(\alpha_1^{i-j-1}\expectnorm{\xhat_j}\right)+\alpha_3\sum_{j=0}^{\tau-1}\alpha_1^j \nonumber\\
    ~&\stackrel{\left(a\right)}{\leq} \alpha_1^{\tau}\expectnorm{\centerr_{c,i-\tau}}+\alpha_2\sum_{j=i-\tau}^{i-1}\expectnorm{\xhat_j}+\alpha_3\sum_{j=0}^{\tau-1}\alpha_1^j \nonumber\\
    ~&\stackrel{\left(b\right)}{\leq} \alpha_1^{\tau}\expectnorm{\centerr_{c,i-\tau}}+\alpha_3\sum_{j=0}^{\tau-1}\alpha_1^j\mathlinebreak~~+\alpha_2\tau\Big(\max_{j\in\mathcal{S}_{m+1}}\expectnorm{\xhat_j} + ~\max_{j\in\mathcal{S}_{m+2}}\expectnorm{\xhat_j}\Big) \nonumber\\
    ~&\stackrel{\left(c\right)}{\leq} \alpha_1^{\tau}  \max_{i\in\mathcal{S}_{m+1}}\expectnorm{\centerr_{c,i}}+\alpha_2\tau \max_{j\in\mathcal{S}_{m+1}}\expectnorm{\xhat_j}\mathlinebreak~~ + \alpha_2\tau\max_{j\in\mathcal{S}_{m+2}}\expectnorm{\xhat_j} +\alpha_3\sum_{j=0}^{\tau-1}\alpha_1^j \nonumber\\
    ~&\stackrel{\left(d\right)}{\leq} \alpha_1^\tau\omega_{m+1}+\alpha_2\tau\chi_{m+1}+\alpha_2\tau\chi_{m+2} + \tau\alpha_3  \label{eq:main-res-part-proof}
\end{align}
where in $\left(a\right)$ we used that $\alpha_1<1$, in $\left(b\right)$ we replaced the consensus error term $\expectnorm{\xhat_j}$ in the summation with the maximum consensus error terms for the two FTC periods over which the summation runs, in $\left(c\right)$ we replaced the centroid error term with the maximum centroid error in the previous period, and in $\left(d\right)$ we updated the notation to use $\omega_m$ and $\chi_m$ and used the fact that $\alpha_1\leq1$. In the last line we took the maximum over both sides of the inequality for $\left(m+1\right)\tau\leq i\leq \left(m+2\right)\tau-1$. Taking the maximum of~\eqref{eq:main-res-part-proof} over $i\in\mathcal{S}_{m+2}$:
\begin{align}
          \omega_{m+2}  &\leq \alpha_1^\tau\omega_{m+1}+\alpha_2\tau\chi_{m+1}+\alpha_2\tau\chi_{m+2} + \tau\alpha_3
\end{align}

\subsection{Derivation of Coupled Recursion in~\eqref{eqn:coupled-recursion}}

We begin by substituting in~\eqref{eqn:max-cent-err} for $\omega_{m+2}$ in~\eqref{eqn:max-cons-err}:
\begin{align}
    \chi_{m+2} &\leq \left(\theta_1 + \tau\theta_2+\tau\left(\tau-1\right)\alpha_2\theta_3\right)\chi_{m+1}\mathlinebreak\quad+\left(\left(\tau-1\right)\theta_2+\tau\left(\tau-1\right)\alpha_2\theta_3\right)\chi_{m+2}\mathlinebreak\quad+\left(\tau+\alpha_1^\tau\left(\tau-1\right)\right)\theta_3\omega_{m+1}+\theta_4 + \tau\left(\tau-1\right)\alpha_3\theta_3
    \end{align}
    Rearranging the $\chi_{m+2}$ terms to the left:
    \begin{align}
        &(1-\left(\tau-1\right)\theta_2-\tau\left(\tau-1\right)\alpha_2\theta_3)\chi_{m+2} \nonumber\\ &\quad\leq \left(\theta_1 + \tau\theta_2+\tau\left(\tau-1\right)\alpha_2\theta_3\right)\chi_{m+1}\mathlinebreak\qquad+\left(\tau+\alpha_1^\tau\left(\tau-1\right)\right)\theta_3\omega_{m+1}+\theta_4 + \tau\left(\tau-1\right)\alpha_3\theta_3\nonumber\\
    &\chi_{m+2} \leq \frac{3}{2}\left(\theta_1 + \tau\theta_2+\tau\left(\tau-1\right)\alpha_2\theta_3\right)\chi_{m+1}\mathlinebreak\qquad+\frac{3}{2}\left(\tau+\alpha_1^\tau\left(\tau-1\right)\right)\theta_3\omega_{m+1}+\frac{3}{2}\theta_4 + \frac{3}{2}\tau\left(\tau-1\right)\alpha_3\theta_3 \label{eqn:chi_m2-decoupled}
\end{align}
where the last line follows by setting the step-size such that ${1-\left(\tau-1\right)\theta_2-\tau\left(\tau-1\right)\alpha_2\theta_3>\frac{2}{3}}$. Substituting~\eqref{eqn:chi_m2-decoupled} into~\eqref{eqn:max-cent-err} yields:
\begin{align}
    \omega_{m+2} &\leq \left(\alpha_1^\tau+\frac{3}{2}\alpha_2\theta_3\tau\left(\tau+\alpha_1^{\tau}\left(\tau-1\right)\right)\right)\omega_{m+2}  \mathlinebreak\quad+\alpha_2\tau\left(1+\frac{3}{2}\theta_1+\frac{3}{2}\tau\theta_2+\frac{3}{2}\tau\left(\tau-1\right)\alpha_2\theta_3\right)\chi_{m+1}\mathlinebreak\quad+\tau\alpha_3+\frac{3}{2}\alpha_2\theta_4+\frac{3}{2}\alpha_2\alpha_3\theta_3\tau^2\left(\tau-1\right)
\end{align}
Noting that ${\alpha_2=\mathcal{O}\left(\mu\right)}$, ${\theta_2=\mathcal{O}\left(\mu^2\right)}$, ${\theta_3=\mathcal{O}\left(\mu^2\right)}$, ${\theta_4=\mathcal{O}\left(\mu^2\right)}$ we are left with:
\begin{align}
    \omega_{m+2} &\leq \alpha_1^\tau\omega_{m+2} + \alpha_2\tau\left(1+\frac{3}{2}\theta_1\right)\chi_{m+1}+\tau\alpha_3 +\mathcal{O}\left(\mu^3\right) \nonumber\\
    \chi_{m+2} &\leq \frac{3}{2}\left(\theta_1 + \tau\theta_2\right)\chi_{m+1}+\frac{3}{2}\tau\theta_3\omega_{m+1}+\frac{3}{2}\theta_4 +\mathcal{O}\left(\mu^3\right)
\end{align}

\subsection{Proof of Theorem~\ref{thm:main-res}}
Beginning with the coupled recursion in~\eqref{eqn:coupled-recursion}:
\begin{align}\label{eqn:coupled-recursion_repeat}
    \begin{bmatrix}
        \omega_m \\
        \chi_m
    \end{bmatrix} &\leq \begin{bmatrix}
        \alpha_1^\tau & \alpha_2\tau\left(1+\frac{3}{2}\theta_1\right) \\
        \frac{3}{2}\tau\theta_3 & \frac{3}{2}\left(\theta_1+\tau\theta_2\right)
    \end{bmatrix} \begin{bmatrix}
        \omega_{m-1} \\
        \chi_{m-1}
    \end{bmatrix}+\begin{bmatrix}
        \tau\alpha_3 \\
        \frac{3}{2}\theta_4 
    \end{bmatrix}+\mathcal{O}\left(\mu^3\right) \nonumber\\
    &\leq \underbrace{\begin{bmatrix}
        1-\frac{\tau\nu\mu}{4} & \alpha_2\tau\left(1+\frac{3}{2}\theta_1\right) \\
        \frac{3}{2}\tau\theta_3 & \frac{3}{2}\left(\theta_1+\tau\theta_2\right)
    \end{bmatrix}}_{\triangleq H} \begin{bmatrix}
        \omega_{m-1} \\
        \chi_{m-1}
    \end{bmatrix}+\underbrace{\begin{bmatrix}
        \tau\alpha_3 \\
        \frac{3}{2}\theta_4 
    \end{bmatrix}}_{\triangleq p}+\mathcal{O}\left(\mu^3\right)
\end{align} 
where the last line follows from ${\left(1-x\right)^\tau\leq 1-\frac{\tau x}{2}}$ for ${x\leq\frac{1}{\tau}}$ which holds here for $\mu\leq\frac{2}{\nu\tau}$.

We will bound the spectral radius of $H$ in the subsequent lemma to establish the convergence of~\eqref{eqn:coupled-recursion_repeat}. First we note that the eigenvalues of $H$ are given by $ {\lambda_{1,2} = \frac{ H_{11} + H_{22} \pm\sqrt{\left( H_{11} - H_{22} \right)^2+4 H_{12} H_{21} }}{2}}$. Thus:
\begin{align}
    \rho\left(H\right) &= \frac{ H_{11} + H_{22} +\sqrt{\left( H_{11} - H_{22} \right)^2+4 H_{12} H_{21} }}{2} \label{eq:rho_H_exact}
    \end{align}

    \textcolor{black}{\begin{lemma} The spectral radius of $H$ in~\eqref{eqn:coupled-recursion_repeat} is bounded by:
    \begin{align}
    \rho(H)\leq \frac{1+H_{11}+H_{22}(1-H_{11})}{2} \label{eq:rho_H_approx} 
    \end{align}
    for $\mu\leq\frac{\nu\sqrt{K}(1-\epstau)}{72\tau\delta\sqrt{\delta^2+\beta^2}}$.
    \end{lemma}
    \begin{proof}
    Rearranging the terms in~\eqref{eq:rho_H_exact} and~\eqref{eq:rho_H_approx}:
    \begin{align}
        &\sqrt{\left( H_{11} - H_{22} \right)^2+4 H_{12} H_{21}  }\leq 1-H_{11}H_{22} \\
        &4H_{12}H_{21}\stackrel{(a)}{=} (1-H_{11}^2)(1-H_{22}^2)~\label{eq:rho_H_deriv1}:
    \end{align}
    where $(a)$ follows from squaring both sides and rearranging the terms. Since $0<H_{11},H_{22}<1$, a sufficient condition for~\eqref{eq:rho_H_deriv1} is:
        \begin{align}
        H_{12}H_{21}&\leq \frac{1}{4}(1-H_{11})(1-H_{22})
    \end{align}
    Substituting back in the values for $H_{k\ell}$ from~\eqref{eqn:coupled-recursion_repeat}:
    \begin{align}
        \frac{3}{2}\tau^2\alpha_2\theta_3\left(1+\frac{3}{2}\theta_1\right)&\leq\frac{\tau\nu\mu}{16}\left( 1-\frac{3}{2}\theta_1-\frac{3}{2}\tau\theta_2\right)
    \end{align}
    Noting that $1+\frac{3}{2}\theta_1\leq \frac{5}{2}$ and $1-\frac{3}{2}\tau\theta_2\geq \frac{1}{16}$ for ${\mu\leq\frac{\sqrt{1-\epstau}}{8\tau\sqrt{4\delta^2+\beta^2}}}$. Substituting $\theta_3$ under the condition that $\epstau\leq\frac{3}{4}$:
    \begin{align}
    \mu\leq\frac{\nu\sqrt{K}(1-\epstau)}{72\tau\delta\sqrt{\delta^2+\beta^2}}
    \end{align}
    \end{proof}}
    Thus, 
    \begin{align}
        \rho(H)&\leq \frac{1+H_{11}+H_{22}(1-H_{11})}{2} \nonumber \\ 
        &= 1-\frac{\tau\nu\mu}{8}+\frac{3\tau\nu\mu}{32}\cdot\epstau(1+\epstau)+\mathcal{O}(\mu^3)
    \end{align}

Given that $\rho\left(H\right)<1$ and reconsidering the system in~\eqref{eqn:coupled-recursion_repeat}:
\begin{align}
    \begin{bmatrix}
        \omega_m \\
        \chi_m
    \end{bmatrix} &\leq H^{m-1} \begin{bmatrix}
        \omega_1 \\
        \chi_1
    \end{bmatrix} + \sum_{j=0}^{m-1}H^j p \nonumber\\
    &\leq H^{m-1} \begin{bmatrix}
        \omega_1 \\
        \chi_1
    \end{bmatrix} + \left(I-H\right)^{-1}p \nonumber\\
    \left \lVert \begin{bmatrix}
        \omega_m \\
        \chi_m
    \end{bmatrix} \right\rVert_1  &\leq \left\lVert H^{m-1} \begin{bmatrix}
        \omega_1 \\
        \chi_1
    \end{bmatrix} + \left(I-H\right)^{-1}p \right\rVert_1 \nonumber\\
    \omega_m+\chi_m &\leq \left\lVert H^{m-1}\right\rVert_1 \left\lVert\begin{bmatrix}
        \omega_1 \\
        \chi_1
    \end{bmatrix} \right\rVert_1 + \left\lVert\left(I-H\right)^{-1}p \right\rVert_1 \label{eqn:msd-deriv}
\end{align}
where:
\begin{align}
    &\left(I-H\right)^{-1} \nonumber \\ \quad &= \frac{1}{\det\left(I-H\right)}\begin{bmatrix}
        1-\frac{3}{2}\left(\theta_1+\tau\theta_2\right) & \alpha_2\tau\left(1+\frac{3}{2}\theta_1\right) \\
        \frac{3}{2}\tau\theta_3 & \frac{\tau\nu\mu}{4} 
    \end{bmatrix} \label{eq:main-res-I-H}
\end{align}
For the determinant of $I-H$:
\begin{align}
    \det\left(I-H\right) &= \frac{\tau\nu\mu}{4}\left(1-\frac{3}{2}\left(\theta_1+\tau\theta_2\right)\right)-\mathcal{O}\left(\mu^3\right) 
\end{align}
 Thus, returning to~\eqref{eq:main-res-I-H} and considering the entry-wise inequality:
\begin{align}
    \left(I-H\right)^{-1} &\leq \begin{bmatrix}
        \frac{1}{\mu\tau\nu} & \frac{4\alpha_2(1+\frac{3}{2}\theta_1)}{\mu\nu\left(1-\frac{3}{2}\left(\theta_1+\tau\theta_2\right)\right)} \nonumber \\
        \frac{12\theta_3}{\mu\nu\left(1-\frac{3}{2}\left(\theta_1+\tau\theta_2\right)\right)} & \frac{1}{1-\frac{3}{2}\left(\theta_1+\tau\theta_2\right)} 
    \end{bmatrix}  \\
    &\leq \begin{bmatrix} \frac{4}{\mu\tau\nu} & \frac{240\alpha_2}{\mu\nu\left(1-\epstau\right)} \\
        \frac{360\theta_3}{\mu\nu\left(1-\epstau\right)} & \frac{30}{1-\epstau } \end{bmatrix} 
    \end{align}
    where the last line holds since ${1+\frac{3}{2}\theta_1\leq 2}$ and ${1-\frac{3}{2}\theta_1-\frac{3\tau}{2}\theta_2\geq \frac{1}{30}-\frac{\epstau}{30}}$ under the conditions that $\mu\leq \frac{\sqrt{(1-0.75\epstau)(1-\epstau)}}{5\tau\sqrt{4\delta^2+\beta^2}}$.
    Finally, we get:
    \begin{align}
        &\left(I-H\right)^{-1}p \nonumber \\&\leq \begin{bmatrix*}[l]
            \frac{4\mu}{\nu K}\sigma^2 + \frac{8640\mu^2\tau^2\delta^2}{\nu^2 K\left(1-\epstau\right)}\sigma^2 +\frac{1440\mu^2\tau\left(1+\epstau\right)\delta^2}{\nu^2 K\left(1-\epstau\right)^2}\sigma^2 +\frac{12\mu\beta^2}{\nu K}\zeta^2\\ \qquad +\frac{4320\mu^2\tau^2\left(1+\epstau\right)\delta^2\beta^2}{\nu^2 K\left(1-\epstau\right)^2}\zeta^2+\frac{25920\mu^2\tau^2\delta^2\beta^2}{\nu^2 K\left(1-\epstau\right)}\zeta^2 + \mathcal{O}\left(\mu^3\right) \\
            \frac{540\mu^2\tau^2}{1-\epstau}\sigma^2+\frac{90\mu^2\tau\left(1+\epstau\right)}{\left(1-\epstau\right)^2}\sigma^2 \\ \qquad\qquad +\frac{1620\mu^2\tau^2\beta^2}{1-\epstau}\zeta^2+\frac{270\mu^2\tau^2(1+\epstau)\beta^2}{(1-\epstau)^2}\zeta^2+ \mathcal{O}\left(\mu^3\right)
        \end{bmatrix*}
    \end{align}

The MSD is then:
\begin{align}
    &\frac{1}{K}\norm{ \w_i - \one\otimes w^o}^2 \nonumber\\
    &\quad=  \frac{1}{K}\norm{ \w_i -\w_{c,i}+\w_{c,i}-\one\otimes w^o }^2 
    \end{align}
    The maximum MSD over the $m$-th FTC sequence is thus:
    \begin{align}
    &\quad\leq \frac{2}{K}\norm{ \w_i -\w_{c,i}}^2 + \frac{2}{K}\norm{\w_{c,i}-\one\otimes w^o}^2 \nonumber\\
    &\quad\leq 2 \norm{\xhat_i}^2+2\norm{\centerr_{c,i}}^2 \nonumber\\
    &\max_{i\in\mathcal{S}_m} \expectnorm{ \w_i - \one\otimes w^o} \leq 2\omega_m + 2\chi_m \nonumber\\
    &\quad\stackrel{\eqref{eqn:msd-deriv}}{\leq} \lVert H^{m-1} \rVert_1 \left(2\omega_1+2\chi_1\right)+2\left\lVert\left(I-H\right)^{-1}p \right\rVert_1 \nonumber\\
    &\quad\leq \lVert H^{m-1} \rVert_2 \left(2\sqrt{2}\omega_1+2\sqrt{2}\chi_1\right)+2\left\lVert\left(I-H\right)^{-1}p \right\rVert_1 \nonumber\\
    &\quad\leq \rho(H)^{m-1} \kappa\left(2\sqrt{2}\omega_1+2\sqrt{2}\chi_1\right)+2\left\lVert\left(I-H\right)^{-1}p \right\rVert_1 \nonumber\\
    &\quad\leq \gamma^{m-1}c_1+\mathcal{O}\Bigg( \frac{\mu\sigma^2}{\nu K}+\frac{\mu\beta^2\zeta^2}{\nu K}  \mathlinebreak\qquad+\frac{\mu^2\tau^2\left(1+\epstau\right)(\sigma^2+\beta^2\zeta^2)}{\nu^2 K\left(1-\epstau\right)^2}\mathlinebreak\qquad + \frac{\mu^2\tau^2(1+\epstau)(\sigma^2+\beta^2\zeta^2}{(1-\epstau)^2}\Bigg)+\mathcal{O}\left(\mu^2\right)
\end{align}
where $c_1=2\kappa\sqrt{2}(\omega_1+\chi_1)$, ${\kappa\triangleq \norm{H}\norm{H^{-1}}}$ is the spectral condition number of H, and ${\gamma=1-\frac{\tau\nu\mu}{8}+\frac{3\tau\nu\mu\epstau(1+\epstau)}{32}+\mathcal{O}(\mu^3)}$.

\clearpage
\section*{Supplementary Material: FTC Rules}\label{app:ftc-rules}

\subsection*{Path Graph}
We provide an FTC sequence for a path graph where the number of nodes is a power of 2. It is based on a recursive formula. Note that $A^{(K)}_i$ denotes the $i^{\textrm{th}}$ matrix in the FTC sequence with $K$ nodes. 
\begin{align}\label{mat:4path}
    &A^{(4)}_{1}=\begin{bmatrix}[1.35]
\frac{1}{2} & \frac{1}{2} & 0 & 0 \\
\frac{1}{2} & \frac{1}{2} & 0 & 0 \\
0 & 0 & \frac{1}{2} & \frac{1}{2} \\
0 & 0 & \frac{1}{2} & \frac{1}{2} 
\end{bmatrix}, \quad A^{(4)}_{2}=\begin{bmatrix}[1.35]
1 & 0 & 0 & 0 \\
0 & 0 & 1 & 0 \\
0 & 1 & 0 & 0 \\
0 & 0 & 0 & 1 
\end{bmatrix}, \nonumber\\
&\qquad\qquad\qquad A^{(4)}_{3}=\begin{bmatrix}[1.35]
\frac{1}{2} & \frac{1}{2} & 0 & 0 \\
\frac{1}{2} & \frac{1}{2} & 0 & 0 \\
0 & 0 & \frac{1}{2} & \frac{1}{2} \\
0 & 0 & \frac{1}{2} & \frac{1}{2} 
\end{bmatrix}
\end{align}

Additionally we define the $2\times 2$ exchange matrix as \footnotesize{${F\triangleq \begin{bmatrix}
0 & 1 \\
1 & 0 
\end{bmatrix}  }$}\normalsize.\\

The FTC sequence for the path graph with $K$ nodes is then a sequence of $\tau={K-1}$ matrices given by:
    
\begin{align}
&\text{for} \qquad 1\leq i\leq \frac{K}{2}-1: \nonumber \\
    &\qquad A_{i}^{(K)} = \begin{bmatrix}
A_{i}^{\left(\frac{K}{2}\right)} & 0 \\
0 & A_i^{\left(\frac{K}{2}\right)}
\end{bmatrix} \nonumber \\
&\text{for} \qquad \frac{K}{2}\leq i \leq K-2:\nonumber\\
&\qquad A_{i}^{(K)} = \begin{bmatrix}
I_{K-i-1} &  &  &  &  \\
 & F &  &  &  \\
0 &  & \ddots &  & 0 \\
 &  &  & F &  \\
 &  &  &  & I_{K-i-1} 
\end{bmatrix}  \nonumber\\
&\text{finally:} \qquad \nonumber \\
&\qquad A_{K-1}^{(K)} = \begin{bmatrix}
\frac{1}{2}\one_2\one_2^\mathsf{T}  &  & 0 \\
 & \ddots &  \\
0 &  & \frac{1}{2}\one_2\one_2^\mathsf{T} 
\end{bmatrix}  
\end{align}

\subsection*{Ring Graphs}
We provide an FTC sequence for a ring graph where the number of nodes $K$ is a power of 2. First we define the $2\times2$ exchange matrix: \footnotesize{${F\triangleq \begin{bmatrix}
0 & 1 \\
1 & 0 
\end{bmatrix}  }$}\normalsize and the  $K\times K$ matrices:
\begin{align}
    &B_1 \triangleq \begin{bmatrix}
    \frac{1}{2}\one_2\one_2^\mathsf{T}  & 0 & \dots  & 0 & 0\\
    0 & \frac{1}{2}\one_2\one_2^\mathsf{T} &\dots & 0 & 0 \\
     & & \ddots &  \\
    0 & 0 & \dots &\frac{1}{2}\one_2\one_2^\mathsf{T} & 0 \\
    0 & 0 & \dots  & 0 & \frac{1}{2}\one_2\one_2^\mathsf{T} \\
\end{bmatrix}  \nonumber\\
    &B_2 \triangleq \begin{bmatrix}
    \frac{1}{2}  & 0 & \dots  & 0 & \frac{1}{2} \\
    0 & \frac{1}{2}\one_2\one_2^\mathsf{T} &\dots & 0 & 0 \\
     & & \ddots &  \\
    0 & 0 & \dots &\frac{1}{2}\one_2\one_2^\mathsf{T} & 0 \\
    \frac{1}{2}  & 0 & \dots  & 0 & \frac{1}{2} \\
\end{bmatrix}\nonumber\\
    & D_1 \triangleq \begin{bmatrix}
    F  & 0 & \dots & 0 & 0 \\
    0 & F  & \dots & 0  & 0 \\
     & & \ddots & &  \\
    0 & 0 & \dots &  F & 0 \\ 
    0 &  0 & \dots & 0 &F 
\end{bmatrix} \nonumber\\
    & D_2 \triangleq \begin{bmatrix}
    0  & 0 & \dots & 0 & 1 \\
    0 & F  & \dots & 0  & 0 \\
     & & \ddots & &  \\
    0 & 0 & \dots &  F & 0 \\ 
    1 &  0 & \dots & 0 & 0
\end{bmatrix} 
\end{align}

The FTC sequence is then defined by:
\begin{align}
    A_i = \begin{cases}
        B_1 & \textrm{if} \quad i=1 \\
        B_2 & \textrm{if} \quad i\&(i-1)=0~ \textrm{and}~ i\neq1 \\
        D_1 & \textrm{if} \quad i\&(i-1)\neq0~ \textrm{and}~ i\%2=1 \\
        D_2 & \textrm{if} \quad i\&(i-1)\neq0~ \textrm{and}~ i\%2=0 \\
    \end{cases}
\end{align}
for $1\leq i\leq \frac{K}{2}$ and where $i\&j$ is the bitwise logical and between numbers $i$ and $j$.

\vfill
}

\vfill

\end{document}